\newtheorem{theorem}{Theorem}[section]
\newtheorem{lemma}[theorem]{Lemma}
\newtheorem{corollary}[theorem]{Corollary}
\theoremstyle{definition}
\newtheorem{remark}[theorem]{Remark}
\newtheorem{question}[theorem]{Question}
\numberwithin{equation}{section}
\newcounter{smallromans}
\newenvironment{romanenumerate}
{\begin{list}{{\normalfont\textrm{(\roman{smallromans})}}}%
    {\usecounter{smallromans}\setlength{\itemindent}{0cm}%
      \setlength{\leftmargin}{5.5ex}\setlength{\labelwidth}{5.5ex}%
      \setlength{\topsep}{0.2ex}\setlength{\partopsep}{0ex}%
      \setlength{\itemsep}{0.2ex}}}%
  {\end{list}}
\newcommand{\romanref}[1]{{\normalfont\textrm{(\ref{#1})}}}
\newcounter{smallalphs}
\newenvironment{alphenumerate}
{\begin{list}{{\normalfont\textrm{(\alph{smallalphs})}}}%
    {\usecounter{smallalphs}\setlength{\itemindent}{0cm}%
      \setlength{\leftmargin}{5.5ex}\setlength{\labelwidth}{5.5ex}%
      \setlength{\topsep}{0.2ex}\setlength{\partopsep}{0ex}%
      \setlength{\itemsep}{0.2ex}}}%
  {\end{list}}
\newcommand{\alphref}[1]{{\normalfont\textrm{(\ref{#1})}}}
\renewcommand{\le}{\ensuremath{\leqslant}}
\renewcommand{\ge}{\ensuremath{\geqslant}}
\newcommand{\N}{\mathbb{N}}
\newcommand{\R}{\mathbb{R}}
\newcommand{\C}{\mathbb{C}}
\renewcommand{\phi}{\ensuremath{\varphi}}
\renewcommand{\epsilon}{\ensuremath{\varepsilon}}
\newcommand{\spa}{\operatorname{span}}
\newcommand{\smashw}[2][l]{{\text{\makebox[0pt][#1]{$#2$}}}}
\begin{document}
\title[Operators on two Banach spaces of continuous functions]{Operators
  on two Banach spaces of continuous func\-tions on locally compact
  spaces of ordinals} \subjclass[2010]%
{Primary 46H10,
% Ideals and subalgebras
47B38, 
% Operators on function spaces
47L10;
% Algebras of operators on Banach spaces and other topological linear spaces
Secondary 06F30,
% Topological lattices, order topologies
46B26,
% Nonseparable Banach spaces 
47L20}
% Operator ideals
\author[T.~Kania]{Tomasz Kania}
\address{Department of Mathematics and Statistics, Fylde
  College, Lancaster University, Lancaster LA1 4YF, United Kingdom.\newline \indent
  Current address: Institute of Mathematics, Polish Academy
  of Sciences, ul. \'Sniadeckich 8, 00-956 War\-sza\-wa, Poland}
\email{tomasz.marcin.kania@gmail.com}
\author[N.~J.~Laustsen]{Niels Jakob  Laustsen} 
\address{Department of Mathematics and Statistics, Fylde
  College, Lancaster University, Lancaster LA1 4YF, United Kingdom}
\email{n.laustsen@lancaster.ac.uk}

\keywords{Banach algebra; maximal ideal; bounded, linear operator;
  Szlenk index; continuous function; ordinal interval; order topology;
  Banach space; primary; complementably homogeneous}
\begin{abstract} 
  Denote by~$[0,\omega_1)$ the set of countable ordinals, equipped
  with the order topo\-logy, let $L_0$ be the disjoint union of the
  compact ordinal intervals~$[0,\alpha]$ for $\alpha$ countable, and
  consider the Banach spaces~$C_0[0,\omega_1)$ and~$C_0(L_0)$
  consisting of all scalar-valued, continuous functions which are
  defined on the locally compact Hausdorff spaces~$[0,\omega_1)$
  and~$L_0$, respectively, and which vanish eventually. Our main
  result states that a bounded, linear operator~$T$ between any pair
  of these two Banach spaces fixes an isomorphic copy of~$C_0(L_0)$ if
  and only if the identity operator on~$C_0(L_0)$ factors through~$T$,
  if and only if the Szlenk index of~$T$ is uncountable. This implies
  that the set $\mathscr{S}_{C_0(L_0)}(C_0(L_0))$ of
  $C_0(L_0)$-strictly singular operators on~$C_0(L_0)$ is the unique
  maximal ideal of the Banach algebra~$\mathscr{B}(C_0(L_0))$ of all
  bounded, linear operators on~$C_0(L_0)$, and that
  $\mathscr{S}_{C_0(L_0)}(C_0[0,\omega_1))$ is the second-largest
  proper ideal of~$\mathscr{B}(C_0[0,\omega_1))$. Moreover, it follows
  that the Banach space~$C_0(L_0)$ is primary and complementably
  homogeneous.
\end{abstract}
\maketitle
\section{Introduction and statement of main results}%
\label{section1}
\noindent 
The purpose of this paper is to advance our understanding of the
(bounded, linear) opera\-tors acting on the Banach
space~$C_0[0,\omega_1)$ of scalar-valued, continuous functions which
vanish eventually and which are defined on the locally compact
Hausdorff space~$[0,\omega_1)$ of all countable ordinals, equipped
with the order topology.  Another Banach space of a similar kind,
$C_0(L_0)$, will play a key role; here~$L_0$ denotes the locally
compact Hausdorff space given by the disjoint union of the compact
ordinal intervals~$[0,\alpha]$ for $\alpha$ countable or,
equi\-valent\-ly, $L_0 = \bigcup_{\alpha<\omega_1} [0,
\alpha]\times\{\alpha+1\}$, endowed with the topology inherited from
the product topology on~$[0,\omega_1)^2$.  As a spin-off of our main
line of inquiry, we obtain some conclusions of possible independent
interest concerning the Banach space~$C_0(L_0)$ and the operators
acting on it.

The motivation behind this work comes primarily from our previous
studies \cite{KL,KKL} (the latter written jointly with P.~Koszmider)
of the Banach algebra~$\mathscr{B}(C_0[0,\omega_1))$ of all operators
on~$C_0[0,\omega_1)$.  Indeed, the main theo\-rem of~\cite{KL} implies
that~$\mathscr{B}(C_0[0,\omega_1))$ has a unique maximal
ideal~$\mathscr{M}$ (the \emph{Loy--Willis ideal}),
while~\cite[Theorem~1.6]{KKL} 
characterizes this ideal as the set of operators which factor
through the Banach space~$C_0(L_0)$:
\begin{equation}\label{MfactoringthroughC0L0}
  \mathscr{M} = \{ TS : S\in\mathscr{B}(C_0[0,\omega_1),C_0(L_0)),\,
  T\in\mathscr{B}(C_0(L_0),C_0[0,\omega_1))\}, 
\end{equation}
using the alternative description of~$C_0(L_0)$ given in
Lemma~\ref{factsaboutEomega1C0omega1}\romanref{EAcongEomega1i}, below.

To state the main result of the present paper precisely, we require
the following three pieces of terminology concerning an operator~$T$
between the Banach spaces~$X$ and~$Y\colon$
\begin{itemize}
\item Let~$W$ be a Banach space. Then~$T$ \emph{fixes an isomorphic
    copy} of~$W$ if there exists an operator $U\colon W\to X$ such
  that the composite operator~$TU$ is an isomorphism onto its range.
\item An operator~$V$ between the Banach spaces~$W$ and~$Z$
  \emph{factors through}~$T$ if there exist operators $R\colon W\to X$
  and $S\colon Y\to Z$ such that $V = STR$.
\item Suppose that $X$ is an Asplund space. Then each weakly$^*$
  compact subset~$K$ of the dual space~$X^*$ of~$X$ has associated
  with it a certain ordinal, which is called the \emph{Szlenk index}
  of~$K$; we refer to \cite{szlenk} or \cite[Section 2.4]{hmvz} for
  the precise definition of this notion. The \emph{Szlenk index} of
  the operator~$T$
  is then defined as the Szlenk index of the image under the adjoint
  operator of~$T$ of the closed unit ball of~$Y^*$.
\end{itemize}

\begin{theorem}\label{mainC0L0thm}
  Let~$X$ and~$Y$ each denote either of the Banach
  spaces~$C_0[0,\omega_1)$ and~$C_0(L_0),$ and let~$T$ be an operator
  from~$X$ to~$Y$.  Then the following three conditions are
  equivalent:
  \begin{alphenumerate}
  \item\label{mainC0L0thm2} $T$ fixes an isomorphic copy of~$C_0(L_0);$
  \item\label{mainC0L0thm1} the identity operator on~$C_0(L_0)$
    factors through~$T;$
  \item\label{mainC0L0thm3} the Szlenk index of~$T$ is uncountable.
  \end{alphenumerate}
\end{theorem}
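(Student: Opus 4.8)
The plan is to prove the cycle of implications \alphref{mainC0L0thm1}$\Rightarrow$\alphref{mainC0L0thm2}$\Rightarrow$\alphref{mainC0L0thm3}$\Rightarrow$\alphref{mainC0L0thm1}, of which the first two are routine and the last carries all the weight. For \alphref{mainC0L0thm1}$\Rightarrow$\alphref{mainC0L0thm2}, suppose that the identity on $C_0(L_0)$ equals $STR$ for operators $R\colon C_0(L_0)\to X$ and $S\colon Y\to C_0(L_0)$. Then $\|x\| = \|STRx\|\leq\|S\|\,\|TRx\|$ for each $x\in C_0(L_0)$, so the operator $U:=R$ has the property that $TU = TR$ is bounded below, hence an isomorphism onto its range; thus $T$ fixes a copy of $C_0(L_0)$.

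For \alphref{mainC0L0thm2}$\Rightarrow$\alphref{mainC0L0thm3}, I would first record that $C_0(L_0)$ is isometric to the $c_0$-direct sum $\bigl(\bigoplus_{\alpha<\omega_1}C[0,\alpha]\bigr)_{c_0}$, that each summand is an Asplund space of countable Szlenk index, and that these countable indices are cofinal in $\omega_1$; since the Szlenk index of a $c_0$-sum is the supremum of the indices of its summands, this gives $\Sz(C_0(L_0)) = \omega_1$. Now let $U\colon C_0(L_0)\to X$ witness \alphref{mainC0L0thm2}, so that $TU$ is bounded below. Because $TU$ is bounded below, its range is isomorphic to $C_0(L_0)$ and hence $\Sz(TU)\geq\Sz(C_0(L_0)) = \omega_1$, while the operators of Szlenk index at most a fixed ordinal form a closed operator ideal, so that $\Sz(TU)\leq\Sz(T)$. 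Combining these yields $\Sz(T)\geq\omega_1$, which is \alphref{mainC0L0thm3}.

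The substance of the theorem lies in \alphref{mainC0L0thm3}$\Rightarrow$\alphref{mainC0L0thm1}. Using the description of $C_0(L_0)$ as $\bigl(\bigoplus_{\alpha<\omega_1}C[0,\alpha]\bigr)_{c_0}$ (via Lemma~\ref{factsaboutEomega1C0omega1}\romanref{EAcongEomega1i}), the goal is to produce an uncountable, pairwise disjointly supported family of ``blocks'' inside $X$ and $Y$ on which $T$ reproduces the summands $C[0,\alpha]$, and then to assemble these into the required factorization. The local ingredient is a $C(K)$-operator theorem of Alspach--Bourgain type: an operator whose Szlenk index exceeds $\omega^{\xi}$ fixes a copy of $C[0,\omega^{\omega^{\xi}}]$, and such a copy may be taken complemented (via a Sobczyk/Pe\l czy\'nski argument exploiting the $c_0$-saturation of the spaces at hand), so that $I_{C[0,\omega^{\omega^{\xi}}]}$ factors through the operator with uniform control. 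The global ingredient is a transfinite selection: having chosen disjointly supported reproductions for ordinals below some countable $\gamma$, the part of $T$ already used sits on a $c_0$-sum of countably many summands of countable Szlenk index and therefore has countable Szlenk index; since $\Sz(T) = \omega_1$ is uncountable, the complementary part of $T$ still has uncountable Szlenk index, so a fresh block — disjoint from its predecessors and reproducing a $C[0,\alpha_\gamma]$ with $\alpha_\gamma$ chosen large — can be extracted. Iterating through all $\gamma<\omega_1$ produces an uncountable family whose indices $\alpha_\gamma$ are cofinal in $\omega_1$; the disjointness of supports, together with the uniform bounds from the local step, makes the induced maps $R$ and $S$ bounded and yields $STR = I_{C_0(L_0)}$.

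The main obstacle is precisely this transfinite assembly. A mere sequence of reproductions would build only a separable $c_0$-sum of countable Szlenk index, not $C_0(L_0)$, so it is essential that the induction continue through all countable ordinals; the crux is the stability claim that deleting a countable, complemented chunk of countable Szlenk index leaves the Szlenk index of $T$ uncountable, which keeps the induction alive at every countable stage. The secondary technical points — realizing the disjoint supports simultaneously in domain and range through the ordinal-interval geometry of these spaces, and maintaining uniform norm control across the uncountable family so that the $c_0$-summation of the local factorizations stays bounded — must be handled with care, while the four cases for the pair $(X,Y)$ can be treated uniformly, since both spaces are built from the blocks $C[0,\alpha]$ with compatible complementation.
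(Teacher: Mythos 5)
Your outline of the core case follows the paper's own route: identify $C_0(L_0)$ with $E_{\omega_1}=\bigl(\bigoplus_{\alpha<\omega_1}C[0,\alpha]\bigr)_{c_0}$, combine Bourgain's theorem with Pe\l{}czy\'{n}ski's complementation theorem to reproduce a complemented copy of $C[0,\alpha]$ locally, and run a transfinite induction on disjoint blocks, using that subtracting the countably-supported part already used (which has countable Szlenk index) leaves an operator of uncountable Szlenk index because the classes $\mathscr{S\!Z}_\alpha$ are closed operator ideals. This is exactly Lemma~\ref{inductivesteplemma}. One point you flag but do not resolve: the ``uniform bounds from the local step'' do not exist --- Bourgain's theorem gives no control on the norms of the local left inverses $S_\alpha$, so the $c_0$-assembly of $S$ is not automatically bounded. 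The paper fixes this by normalising $\|R_\alpha\|=1$, applying a pigeonhole argument to extract an uncountable set $A(n_0)$ on which $\|S_\alpha\|\le n_0$, and invoking Lemma~\ref{factsaboutEomega1C0omega1}\romanref{EAcongEomega1iii} ($E_A\cong E_{\omega_1}$ for uncountable $A$) to conclude. This is an easy repair, but it is a necessary one.

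The genuine gap is your final claim that the four cases for $(X,Y)$ ``can be treated uniformly''. They cannot, and the obstruction is precisely the structural difference between the two spaces. When $Y=C_0[0,\omega_1)$, the assembled left factor would have to be $S\colon y\mapsto\bigl(S_\alpha(P_{\xi(\alpha)}-P_{\eta(\alpha)})y\bigr)_\alpha$, but this does not map into the $c_0$-sum: a single norm-one element of $C_0[0,\omega_1)$ (for instance the characteristic function of $[0,\lambda]$ for a suitable limit ordinal $\lambda$) can have restriction of norm $1$ to infinitely many pairwise disjoint blocks $(\eta(\alpha),\xi(\alpha)]$, so the image sequence need not vanish at infinity. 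Equivalently, the closed span of the disjointly supported copies $Z_\alpha$ inside $C_0[0,\omega_1)$ is an isomorphic copy of $E_{\omega_1}$ whose complementation in $C_0[0,\omega_1)$ is exactly what is \emph{not} available (the natural candidate projection fails to preserve continuity at limit ordinals), so Lemma~\ref{operatorsfactoringID} cannot be applied directly either. The paper circumvents this by reducing every case with $Y=C_0[0,\omega_1)$ to the case $Y=C_0(L_0)$ via Lemma~\ref{factsaboutEomega1C0omega1}\romanref{factsaboutC0omega1ii}: either $T\notin\mathscr{M}$, in which case $I_{C_0[0,\omega_1)}$ and hence $I_{C_0(L_0)}$ factors through $T$, or $T\in\mathscr{M}$, in which case $T=VUT$ with $U$ taking values in $C_0(L_0)$ and $UT$ having the same (uncountable) Szlenk index as $T$, so the main case applies to $UT$. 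Some such reduction --- which uses the nontrivial factorization description \eqref{MfactoringthroughC0L0} of the Loy--Willis ideal --- is needed to complete your argument; the case $X=C_0(L_0)$ is then handled by precomposing with the complemented embedding of $C_0(L_0)$ into $C_0[0,\omega_1)$ from Lemma~\ref{factsaboutEomega1C0omega1}\romanref{factsaboutC0omega1i}.
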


In the case where $X= Y\in\{C_0[0,\omega_1),C_0(L_0)\}$,
Theorem~\ref{mainC0L0thm} leads to significant new information about
the lattice of closed ideals of the Banach algebra~$\mathscr{B}(X)$ of
all operators on~$X$. (Throughout this paper, the term \emph{ideal}
means a two-sided, algebraic ideal, while a \emph{maximal ideal} is a
proper ideal which is maximal with respect to inclusion among all
proper ideals.)  To facilitate the statement of these conclusions, let
us introduce the notation~$\mathscr{S}_W(X)$ for the set of those
operators on~$X$ that do not fix an isomorphic copy of the Banach
space~$W$; such operators are called \emph{$W$-strictly singular}.
The set~$\mathscr{S}_W(X)$ is closed in the norm topology, and it is
closed under composition by arbitrary operators, so
that~$\mathscr{S}_W(X)$ is a closed ideal of~$\mathscr{B}(X)$ if and
only if~$\mathscr{S}_W(X)$ is closed under addition.

\begin{corollary}\label{theC0L0singularideal} Let $X =
  C_0[0,\omega_1)$ or $X = C_0(L_0)$. Then
  \begin{align}
    \mathscr{S}_{C_0(L_0)}(X) &= \{ T\in\mathscr{B}(X) : \text{the
      identity operator on}\ C_0(L_0)\ \text{does not factor
      through}\ T\}\notag\\ &= \{ T\in\mathscr{B}(X) : \text{the
      Szlenk index of}\ T\ \text{is
      countable}\},\label{theC0L0singularidealEq1}
  \end{align} and this set is a proper closed ideal
  of~$\mathscr{B}(X)$.

  In the case where $X = C_0(L_0),$ this ideal is the unique maximal
  ideal of~$\mathscr{B}(C_0(L_0)),$ whereas for $X = C_0[0,\omega_1),$
  this ideal is the second-largest proper ideal
  of~$\mathscr{B}(C_0[0,\omega_1)),$ in the following precise sense:
  \begin{itemize}
  \item $\mathscr{S}_{C_0(L_0)}(C_0[0,\omega_1))$ is properly
    contained in the Loy--Willis ideal~$\mathscr{M};$ and
  \item for each proper ideal~$\mathscr{I}$
    of~$\mathscr{B}(C_0[0,\omega_1)),$ either $\mathscr{I} =
    \mathscr{M}$ or $\mathscr{I}\subseteq
    \mathscr{S}_{C_0(L_0)}(C_0[0,\omega_1))$.
  \end{itemize}
\end{corollary}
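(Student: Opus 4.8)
The plan is to deduce everything from Theorem~\ref{mainC0L0thm} together with the two facts recalled in the introduction: that $\mathscr{M}$ is the unique maximal ideal of $\mathscr{B}(C_0[0,\omega_1))$ and that it admits the factorization description~\eqref{MfactoringthroughC0L0}. First I would dispose of the two displayed equalities. By definition $\mathscr{S}_{C_0(L_0)}(X)$ is the set of $T\in\mathscr{B}(X)$ that do not fix an isomorphic copy of $C_0(L_0)$; negating the equivalence of the three conditions of Theorem~\ref{mainC0L0thm} in the diagonal case $Y=X$ shows at once that this coincides with the set of $T$ through which $\mathrm{id}_{C_0(L_0)}$ does not factor, and with the set of $T$ whose Szlenk index is countable. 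Since the introduction already records that $\mathscr{S}_{C_0(L_0)}(X)$ is norm-closed and stable under composition with arbitrary operators, to prove it is a proper closed ideal it remains only to verify closure under addition and properness. Properness is immediate: as $\Sz(X)$ is uncountable for both choices of $X$, the identity $\mathrm{id}_X$ fixes a copy of $C_0(L_0)$ and so does not belong to $\mathscr{S}_{C_0(L_0)}(X)$.

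The delicate point, and the one I expect to be the main obstacle, is closure under addition, since $W$-strict singularity is not in general preserved under sums. Here I would lean on the Szlenk-index description obtained above: for every ordinal $\xi$ the operators of Szlenk index at most $\omega^{\xi}$ are known to form a closed operator ideal, and these ideals increase with $\xi$. If $S$ and $T$ lie in $\mathscr{S}_{C_0(L_0)}(X)$, then $\Sz(S)$ and $\Sz(T)$ are countable, so $\Sz(S)\leq\omega^{\xi}$ and $\Sz(T)\leq\omega^{\xi}$ for a common countable ordinal $\xi$ (the ordinals $\omega^{\xi}$, $\xi<\omega_1$, being cofinal in $\omega_1$); addition within that single ideal then gives $\Sz(S+T)\leq\omega^{\xi}<\omega_1$, whence $S+T\in\mathscr{S}_{C_0(L_0)}(X)$. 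This completes the proof that $\mathscr{S}_{C_0(L_0)}(X)$ is a proper closed ideal.

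For $X=C_0(L_0)$ I would show that every proper ideal is contained in $\mathscr{S}_{C_0(L_0)}(C_0(L_0))$. Let $\mathscr{I}$ be a proper ideal and suppose, for contradiction, that some $T\in\mathscr{I}$ does not lie in $\mathscr{S}_{C_0(L_0)}(C_0(L_0))$. Then $T$ fixes a copy of $C_0(L_0)$, so by Theorem~\ref{mainC0L0thm} there are operators $R$ and $S$ with $STR=\mathrm{id}_{C_0(L_0)}$; as $\mathscr{I}$ is an ideal this forces $\mathrm{id}_{C_0(L_0)}\in\mathscr{I}$ and hence $\mathscr{I}=\mathscr{B}(C_0(L_0))$, contradicting properness. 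Thus $\mathscr{I}\subseteq\mathscr{S}_{C_0(L_0)}(C_0(L_0))$ for every proper $\mathscr{I}$, and since $\mathscr{S}_{C_0(L_0)}(C_0(L_0))$ is itself proper it is the unique maximal ideal.

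Finally, for $X=C_0[0,\omega_1)$ I would treat the two required assertions in turn. Because $\mathscr{S}_{C_0(L_0)}(C_0[0,\omega_1))$ is a proper ideal and every proper ideal of the unital algebra $\mathscr{B}(C_0[0,\omega_1))$ is contained in a maximal one (Zorn's lemma), which must be the unique maximal ideal $\mathscr{M}$, we get $\mathscr{S}_{C_0(L_0)}(C_0[0,\omega_1))\subseteq\mathscr{M}$. For the dichotomy, let $\mathscr{I}$ be a proper ideal with $\mathscr{I}\neq\mathscr{M}$ and suppose some $T\in\mathscr{I}$ fixes a copy of $C_0(L_0)$; Theorem~\ref{mainC0L0thm} then provides $R\colon C_0(L_0)\to C_0[0,\omega_1)$ and $S\colon C_0[0,\omega_1)\to C_0(L_0)$ with $STR=\mathrm{id}_{C_0(L_0)}$. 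Writing an arbitrary $V=BA\in\mathscr{M}$ as in~\eqref{MfactoringthroughC0L0}, with $A\colon C_0[0,\omega_1)\to C_0(L_0)$ and $B\colon C_0(L_0)\to C_0[0,\omega_1)$, I would factor $V=B\,\mathrm{id}_{C_0(L_0)}\,A=(BS)\,T\,(RA)\in\mathscr{I}$, so $\mathscr{M}\subseteq\mathscr{I}$; maximality of $\mathscr{M}$ then yields $\mathscr{I}=\mathscr{M}$, a contradiction. Hence no $T\in\mathscr{I}$ fixes a copy of $C_0(L_0)$, i.e.\ $\mathscr{I}\subseteq\mathscr{S}_{C_0(L_0)}(C_0[0,\omega_1))$. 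To see that the inclusion $\mathscr{S}_{C_0(L_0)}(C_0[0,\omega_1))\subseteq\mathscr{M}$ is strict, I would invoke that $C_0(L_0)$ is isomorphic to a complemented subspace of $C_0[0,\omega_1)$, say via $S_0\colon C_0[0,\omega_1)\to C_0(L_0)$ and $T_0\colon C_0(L_0)\to C_0[0,\omega_1)$ with $S_0T_0=\mathrm{id}_{C_0(L_0)}$: then $V:=T_0S_0\in\mathscr{M}$ by~\eqref{MfactoringthroughC0L0}, while $S_0VT_0=\mathrm{id}_{C_0(L_0)}$ exhibits $\mathrm{id}_{C_0(L_0)}$ as factoring through $V$, so $V$ fixes a copy of $C_0(L_0)$ and therefore $V\in\mathscr{M}\setminus\mathscr{S}_{C_0(L_0)}(C_0[0,\omega_1))$.
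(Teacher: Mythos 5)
Your proposal is correct and follows essentially the same route as the paper: the set equalities come from negating Theorem~\ref{mainC0L0thm}, the ideal property from writing the set as the union of the increasing chain of Brooker's closed ideals $\mathscr{S\!Z}_\alpha$, maximality for $X=C_0(L_0)$ from the observation (cited by the paper to Dosev and Johnson, proved directly by you) that an ideal avoiding operators through which the identity factors is automatically the unique maximal ideal, and the second-largest-ideal dichotomy from combining the factorization $I_{C_0(L_0)}=STR$ with the description~\eqref{MfactoringthroughC0L0} of $\mathscr{M}$. The only cosmetic differences are that you spell out the cofinality argument for closure under addition and argue the dichotomy by contradiction rather than directly.
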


Finally, as an easy Banach-space theoretic consequence of these
results, we shall show that~$C_0(L_0)$ has the following two
properties, defined for any Banach space~$X$:
\begin{itemize}
\item $X$ is \emph{primary} if, for each
  projection~$P\in\mathscr{B}(X)$, either the kernel of~$P$ or the
  range of~$P$ (or both) is isomorphic to~$X$;
\item $X$ is \emph{complementably homogeneous} if, for each closed
  subspace~$W$ of~$X$ such that~$W$ is isomorphic to~$X$, there exists
  a closed, complemented subspace~$Y$ of~$X$ such that~$Y$ is
  isomorphic to~$X$ and~$Y$ is contained in~$W$.
\end{itemize}
\begin{corollary}\label{C0L0primary} The Banach space $C_0(L_0)$ is
  primary and complementably homogeneous. 
\end{corollary}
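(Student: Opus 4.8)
The plan is to deduce both assertions from Theorem~\ref{mainC0L0thm}, together with the fact (part of Corollary~\ref{theC0L0singularideal}) that $\mathscr{S}_{C_0(L_0)}(C_0(L_0))$ is a proper ideal of $\mathscr{B}(C_0(L_0))$, and Pe\l czy\'nski's decomposition method. The single Banach-space input we need beyond the main theorem is the self-sum isomorphism
\[
  C_0(L_0)\cong\Bigl(\bigoplus_{n\in\N}C_0(L_0)\Bigr)_{c_0}.
\]
To verify this, write $C_0(L_0)=\bigl(\bigoplus_{\alpha<\omega_1}C[0,\alpha]\bigr)_{c_0}$ and group the summands by homeomorphism type. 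The finite-dimensional summands together contribute a single copy of $c_0$, while, by the Mazurkiewicz--Sierpi\'nski classification, each infinite homeomorphism type of a compact countable ordinal interval already occurs with multiplicity $\aleph_0$ among the intervals $[0,\alpha]$. Forming the countable $c_0$-sum multiplies every multiplicity by~$\aleph_0$, hence leaves the infinite multiplicities unchanged and again collapses the finite part to~$c_0$; thus the self-sum is isomorphic to $C_0(L_0)$. In particular $C_0(L_0)\cong C_0(L_0)\oplus C_0(L_0)$.

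I would treat complementable homogeneity first, since it follows directly from the main theorem with no recourse to the decomposition method. Let $W$ be a closed subspace of $C_0(L_0)$ with $W\cong C_0(L_0)$, and let $J\colon C_0(L_0)\to C_0(L_0)$ be an isomorphism onto~$W$. Regarded as an operator on $C_0(L_0)$, $J$ fixes an isomorphic copy of $C_0(L_0)$ (take the identity for the map~$U$ in the definition), so Theorem~\ref{mainC0L0thm} supplies operators $R,S\in\mathscr{B}(C_0(L_0))$ with $SJR=I_{C_0(L_0)}$. Then $Q:=(JR)S$ is idempotent, its range $Y:=\operatorname{ran}(JR)$ is complemented in $C_0(L_0)$, the map $JR$ is an isomorphism of $C_0(L_0)$ onto~$Y$ with inverse $S|_Y$, and $Y=JR(C_0(L_0))\subseteq J(C_0(L_0))=W$. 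Thus $Y$ is a complemented subspace of $C_0(L_0)$ that is contained in~$W$ and isomorphic to $C_0(L_0)$, as required.

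For primarity, let $P\in\mathscr{B}(C_0(L_0))$ be a projection, so that $C_0(L_0)=\operatorname{ran}P\oplus\ker P$ and $I_{C_0(L_0)}=P+(I-P)$. Since $\mathscr{S}_{C_0(L_0)}(C_0(L_0))$ is a proper ideal, it cannot contain the identity, and being closed under addition it cannot contain both $P$ and $I-P$. Hence at least one of them fixes an isomorphic copy of $C_0(L_0)$; say it is~$P$, the case of $I-P$ being identical with $\ker P=\operatorname{ran}(I-P)$ in place of $\operatorname{ran}P$. By Theorem~\ref{mainC0L0thm} we obtain $R,S$ with $SPR=I_{C_0(L_0)}$, and exactly as above $PR$ is an isomorphism of $C_0(L_0)$ onto a subspace of $\operatorname{ran}P$ that is complemented in $\operatorname{ran}P$ via $(PR)(S|_{\operatorname{ran}P})$. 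Therefore $C_0(L_0)$ is isomorphic to a complemented subspace of $\operatorname{ran}P$, while $\operatorname{ran}P$ is complemented in $C_0(L_0)$; Pe\l czy\'nski's decomposition method, applicable because of the self-sum isomorphism above, now yields $\operatorname{ran}P\cong C_0(L_0)$ (or $\ker P\cong C_0(L_0)$ in the symmetric case).

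The factorization steps are formal consequences of Theorem~\ref{mainC0L0thm}, once one observes that an into-isomorphism, or a ``large'' summand of a projection, fixes a copy of $C_0(L_0)$. The only genuine work is the self-sum isomorphism underpinning Pe\l czy\'nski's method, which is needed for primarity though not for complementable homogeneity. I expect this isomorphism --- and in particular the bookkeeping, via the Mazurkiewicz--Sierpi\'nski classification, showing that each infinite homeomorphism type of a compact countable ordinal interval is realised with countably infinite multiplicity below~$\omega_1$ --- to be the most delicate point, although it is entirely routine by comparison with the proof of Theorem~\ref{mainC0L0thm}.
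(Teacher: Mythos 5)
Your proof is correct and follows essentially the same route as the paper: complementable homogeneity is read off directly from the factorization $SJR=I_{C_0(L_0)}$ supplied by Theorem~\ref{mainC0L0thm} (with $P=JRS$ the required projection), and primarity from the properness of $\mathscr{S}_{C_0(L_0)}(C_0(L_0))$ together with Lemma~\ref{operatorsfactoringID} and Pe\l{}czy\'{n}ski's decomposition method. The only divergence is that you re-derive the self-sum isomorphism $C_0(L_0)\cong c_0(\N,C_0(L_0))$ via the Mazurkiewicz--Sierpi\'{n}ski classification, whereas the paper simply invokes Lemma~\ref{factsaboutEomega1C0omega1}\romanref{factsaboutC0omega1iii} (quoted from \cite{KKL}); your multiplicity argument for that isomorphism is sound.
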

The counterpart of this corollary is true for~$C_0[0,\omega_1)$; this
is due to Alspach and Benya\-mi\-ni~\cite[Theorem~1]{AB} and the
present authors and Koszmider~\cite[Corollary~1.12]{KKL},
respectively.

\section{Preliminaries}
\noindent
In this section we shall explain our conventions and terminology in
further detail, state some important theorems that will be required in
the proof of Theorem~\ref{mainC0L0thm}, and establish some auxiliary
results.

All Banach spaces are supposed to be over the same scalar field, which
is either the real field~$\R$ or the complex field~$\C$. By an
\emph{operator}, we understand a bounded and linear mapping between
Banach spaces.  We write~$I_X$ for the identity operator on the
Banach~space~$X$.

The following elementary characterization of the operators that the
identity operator factors through is well known.
\begin{lemma}\label{operatorsfactoringID}
  Let $X,$ $Y,$ and~$Z$ be Banach spaces. Then the identity operator
  on~$Z$ factors through an operator $T\colon X\to Y$ if and only if
  $X$ contains a closed subspace~$W$ such that:
  \begin{itemize}
  \item $W$ is isomorphic to~$Z;$
  \item the restriction of~$T$ to~$W$ is bounded below, in the sense
    that there exists a constant $\epsilon>0$ such that
    $\|Tw\|\ge\epsilon\|w\|$ for each $w\in W;$
  \item the image of~$W$ under~$T$ is complemented in~$Y$.
  \end{itemize}
\end{lemma}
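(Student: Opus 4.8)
The plan is to prove the two implications separately, extracting the required data explicitly from each hypothesis. The only non-formal ingredient is the standard fact that a bounded-below operator defined on a Banach space is an isomorphism onto its image, which is then automatically closed; everything else reduces to bookkeeping with the factorization identity.

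For the forward implication, suppose that $I_Z = STR$ for some operators $R\colon Z\to X$ and $S\colon Y\to Z$, which is exactly what it means for $I_Z$ to factor through~$T$, and take $W = R(Z)$ as the candidate subspace. From $STR = I_Z$ we obtain $\|z\|\le\|S\|\,\|T\|\,\|Rz\|$ for every $z\in Z$, so (in the only non-trivial case $Z\ne\{0\}$, where $S$ and $T$ are nonzero) $R$ is bounded below; hence $W$ is closed and $R$ is an isomorphism of~$Z$ onto~$W$, giving the first property. The same identity gives $\|z\|\le\|S\|\,\|TRz\|$, which together with $\|Rz\|\le\|R\|\,\|z\|$ yields $\|Tw\|\ge(\|R\|\,\|S\|)^{-1}\|w\|$ for $w = Rz\in W$, so that $T|_W$ is bounded below, establishing the second property.

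The third property, that $T(W)$ is complemented in~$Y$, is where the existence of the factorization is genuinely used, and I expect this to be the crux of the argument (the remaining steps being routine norm estimates). The natural candidate for the projection is $P = TRS\in\mathscr{B}(Y)$: since $STR = I_Z$, one computes $P^2 = TR(STR)S = TRS = P$, so $P$ is idempotent. Its range is contained in $TR(Z) = T(W)$, while the identity $P(TRz) = TR(STR)z = TRz$ shows that $P$ fixes $T(W)$ pointwise; hence $P$ is a bounded projection of~$Y$ onto~$T(W)$, as required.

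For the converse I would run this construction backwards. Given a closed subspace~$W$ with the three listed properties, fix an isomorphism $\Phi\colon Z\to W$ and observe that, being bounded below, $T|_W$ is an isomorphism of~$W$ onto the closed subspace~$T(W)$; consequently the composite $(T|_W)\Phi\colon Z\to T(W)$ is an isomorphism, with some inverse $\Psi\colon T(W)\to Z$. Letting $P\colon Y\to T(W)$ denote a bounded projection onto~$T(W)$, I would take $R\colon Z\to X$ to be $\Phi$ followed by the inclusion $W\hookrightarrow X$, and set $S = \Psi P\colon Y\to Z$. A direct computation, in which $P$ acts as the identity on the image of $(T|_W)\Phi$, then gives $STR = \Psi(T|_W)\Phi = I_Z$, exhibiting the desired factorization and completing the proof.
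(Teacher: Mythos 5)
Your argument is correct and complete: the forward direction correctly extracts $W=R(Z)$ and exhibits $TRS$ as a bounded projection of~$Y$ onto~$T(W)$, and the converse assembles $R$ and $S=\Psi P$ so that $STR=I_Z$. The paper itself offers no proof of this lemma, dismissing it as a well-known elementary fact, and what you have written is exactly the standard argument one would expect it to be referring to.
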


For an ordinal~$\alpha$ and a pair~$(X,Y)$ of Banach spaces,
let~$\mathscr{S\!Z}_\alpha(X,Y)$ denote the set of operators $T\colon
X\to Y$ such that the Szlenk index of~$T$ is defined and does not
exceed~$\omega^\alpha$.  Brooker \cite[Theorem~2.2]{brookerJOT} has
proved the following result.

\begin{theorem}[Brooker]\label{brookerthm}
  The class~$\mathscr{S\!Z}_\alpha$ is a closed, injective, and
  surjective operator ideal in the sense of Pietsch for each
  ordinal~$\alpha$.
\end{theorem}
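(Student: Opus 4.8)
The plan is to verify directly that $\mathscr{S\!Z}_\alpha$ satisfies Pietsch's axioms for a closed, injective, and surjective operator ideal, reducing every requirement to the behaviour of the Szlenk index of the weak$^*$ compact, convex, symmetric set $T^*(B_{Y^*})$ under the natural operations. Two elementary facts underlie everything: first, $\Sz$ is monotone under inclusion, so $K\subseteq L$ forces $\Sz(K)\le\Sz(L)$; and second, $\Sz$ is invariant under non-zero scaling, since the $\epsilon$-Szlenk derivation of $\lambda K$ is the rescaled $(\epsilon/|\lambda|)$-derivation of $K$, so the supremum over $\epsilon$ defining $\Sz(\lambda K)$ equals $\Sz(K)$. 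I would record these at the outset, together with the observation that every finite-rank --- indeed every compact --- operator lies in $\mathscr{S\!Z}_0$, and hence in $\mathscr{S\!Z}_\alpha$: the adjoint image of the unit ball is then norm-compact, on such a set the weak$^*$ and norm topologies coincide, and so its Szlenk index is at most $1=\omega^0\le\omega^\alpha$.

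The injectivity and surjectivity are the most transparent points, and I would dispatch them first. If $j\colon Y\to Z$ is a metric injection, then by Hahn--Banach its adjoint is a metric surjection, so $j^*(B_{Z^*})=B_{Y^*}$ and consequently $(jT)^*(B_{Z^*})=T^*j^*(B_{Z^*})=T^*(B_{Y^*})$; the two defining sets literally coincide, giving $\Sz(jT)=\Sz(T)$. Dually, if $q\colon W\to X$ is a metric surjection, then $q^*$ is an isometry that is moreover a weak$^*$-to-weak$^*$ homeomorphism onto its weak$^*$ closed range, so it preserves both norm-diameters and the weak$^*$ topology; hence the derivations of $T^*(B_{Y^*})$ and of $(Tq)^*(B_{Y^*})=q^*\bigl(T^*(B_{Y^*})\bigr)$ proceed identically and $\Sz(Tq)=\Sz(T)$. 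In each case the equality of Szlenk indices yields both implications at once.

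The ideal property and closedness rest on two standard one-step derivation estimates, which I would establish as lemmas. For a composition one has $(BTA)^*(B_{Z^*})\subseteq\|B\|\,A^*\bigl(T^*(B_{Y^*})\bigr)$, so by scaling invariance and monotonicity it suffices to control $\Sz(A^*K)$ for the weak$^*$-to-weak$^*$ continuous, $\|A\|$-Lipschitz map $A^*$ and $K=T^*(B_{Y^*})$; an estimate of the form $s_{2\epsilon}(A^*K)\subseteq A^*\bigl(s_{\epsilon/\|A\|}(K)\bigr)$ (proved by a compactness/tube argument) iterates transfinitely to give $\Sz(A^*K)\le\Sz(K)\le\omega^\alpha$, whence $BTA\in\mathscr{S\!Z}_\alpha$. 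For closedness, if $\|T-T_n\|<\delta$ then $T^*(B_{Y^*})$ and $T_n^*(B_{Y^*})$ are within Hausdorff distance $\delta$, and a perturbation estimate of the form $\Sz_\epsilon(T)\le\Sz_{\epsilon-2\delta}(T_n)$ shows that, choosing $n$ with $2\delta<\epsilon$, one has $\Sz_\epsilon(T)\le\Sz(T_n)\le\omega^\alpha$ for every $\epsilon$; taking the supremum gives $T\in\mathscr{S\!Z}_\alpha$.

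The crux --- and the step I expect to be the main obstacle --- is closure under addition, where the naive bound overshoots the threshold. Writing $K=S^*(B_{Y^*})$ and $L=T^*(B_{Y^*})$, one has $(S+T)^*(B_{Y^*})\subseteq K+L$, and the fundamental sub-additivity of the derivation under Minkowski sums, of the form $\Sz_\epsilon(K+L)\le\Sz_{\epsilon/2}(K)+\Sz_{\epsilon/2}(L)$, yields only
\[
  \Sz(K+L)\le\Sz(K)+\Sz(L)\le\omega^\alpha+\omega^\alpha=\omega^\alpha\cdot 2,
\]
which strictly exceeds $\omega^\alpha$ once $\alpha\ge 1$. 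To recover the correct conclusion I would invoke the structural theorem (due to Lancien, and available for operators) that the Szlenk index of a weak$^*$ compact set is, when defined, a power of~$\omega$: thus $\Sz(K+L)=\omega^\gamma$ for some~$\gamma$, and since $\omega^\gamma\le\omega^\alpha\cdot 2<\omega^{\alpha+1}=\omega^\alpha\cdot\omega$ we must have $\gamma\le\alpha$, that is, $\Sz(K+L)\le\omega^\alpha$ and $S+T\in\mathscr{S\!Z}_\alpha$. Combined with scaling invariance this makes each $\mathscr{S\!Z}_\alpha(X,Y)$ a linear subspace, completing the operator-ideal axioms. The genuinely delicate inputs are the two derivation-containment lemmas and, above all, the interplay between the sub-additive estimate and the power-of-$\omega$ structure that pins the index back down to $\omega^\alpha$.
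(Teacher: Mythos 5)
The paper does not actually prove this statement: it is quoted directly from Brooker \cite[Theorem~2.2]{brookerJOT}, so there is no in-paper argument to compare yours against. Judged on its own terms, your reconstruction is essentially correct and follows the route of the cited source. The injectivity and surjectivity reductions via $j^*(B_{Z^*})=B_{Y^*}$ and the isometric, weak$^*$-homeomorphic embedding $q^*$ are fine; the two-sided ideal property via the one-step containment $s_{2\epsilon}(A^*K)\subseteq A^*\bigl(s_{\epsilon/\|A\|}(K)\bigr)$ is the standard Lancien-type estimate (the factor $2$ is exactly the price of the compactness argument --- the finitely many small pieces covering the fibre all contain the common image point --- and it washes out when taking the supremum over $\epsilon$); the perturbation estimate for closedness is likewise standard. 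Most importantly, you correctly identify closure under addition as the genuine crux and resolve it in the right way: sub-additivity of the $\epsilon$-indices under Minkowski sums gives only $\Sz(S+T)\le\omega^\alpha\cdot 2$, and it is the power-of-$\omega$ structure theorem that pins this back down to $\omega^\alpha$; this is precisely why the thresholds $\omega^\alpha$, rather than arbitrary ordinals, are used to define $\mathscr{S\!Z}_\alpha$. One imprecision worth correcting: the power-of-$\omega$ dichotomy is false for general weak$^*$ compact sets (a weak$^*$ convergent sequence together with its limit already has Szlenk index $2$). It requires the set to be convex and symmetric --- equivalently, to satisfy $K=\tfrac12K+\tfrac12K$, which is what Lancien's homogeneity argument exploits --- or, in the operator setting, to be of the form $U^*(B_{Y^*})$. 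Since $(S+T)^*(B_{Y^*})$ is of this form, your argument does go through, but the structural lemma should be stated for that class of sets rather than for arbitrary weak$^*$ compact ones.
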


For a Hausdorff space~$K$, we denote by~$C(K)$ the vector space of all
scalar-valued, con\-ti\-nuous functions defined on~$K$. In the case
where~$K$ is locally compact, the subspace~$C_0(K)$ consisting of
those functions $f\in C(K)$ for which the set $\{k\in K :
|f(k)|\ge\epsilon\}$ is compact for each $\epsilon>0$ is a Banach
space with respect to the supremum norm, and we have $C(K) = C_0(K)$
if and only if~$K$ is compact.

The case where $K$ is an ordinal interval (always equipped with the
order topology) will be particularly important for us. Bessaga and
Pe\l{}czy\'{n}ski~\cite{bessagapelczynski} have shown that the Banach
spaces~$C[0,\omega^{\omega^\alpha}]$, where~$\alpha$ is a countable
ordinal, exhaust all possible isomorphism classes of Banach spaces of
the form~$C(K)$ for a countably infinite, compact metric space~$K$.
The Banach spaces $C[0,\omega^{\omega^\alpha}]$ may be viewed as
higher-ordinal analogues of the Banach space~$c_0$, which corresponds
to the case where $\alpha=0$.

The Szlenk index can be used to distinguish these Banach spaces
because Samuel~\cite{samuel} has shown
that~$C[0,\omega^{\omega^\alpha}]$ has Szlenk
index~$\omega^{\alpha+1}$ for each countable ordinal~$\alpha$; a
simplified proof of this result, due to H\'{a}jek and
Lancien~\cite{hl}, is given in \cite[Theorem~2.59]{hmvz}.

More importantly for our purposes, Bourgain~\cite{bourgain} has proved
that each operator~$T$, defined on a $C(K)$-space and of sufficiently
large Szlenk index, fixes an isomorphic copy of~$C[0,\alpha]$ for some
countable ordinal~$\alpha$, which increases with the Szlenk index
of~$T$. The precise statement of this result is as follows.

\begin{theorem}[Bourgain]\label{bourgainszlenk}
  Let $X$ be a Banach space, let~$K$ be a compact Hausdorff space, and
  let~$\alpha$ be a countable ordinal. Then each operator $T\colon
  C(K)\to X$ whose Szlenk index exceeds~$\omega^\alpha$ fixes an
  isomorphic copy of~$C[0,\omega^{\omega\cdot \alpha}]$. \end{theorem}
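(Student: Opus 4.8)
The plan is to read the hypothesis $\Sz(T)>\omega^\alpha$ as a statement about the weak$^*$ fragmentation of the set $K_0:=T^*(B_{X^*})\subseteq C(K)^*$, and then to manufacture from this a copy of $C[0,\omega^{\omega\cdot\alpha}]$ inside $C(K)$ on which $T$ is bounded below; once that copy and the lower bound are in place, $T$ fixes it by definition. Concretely, I would first fix a number $\epsilon>0$ for which the $\epsilon$-Szlenk derivation of $K_0$, computed in the weak$^*$ topology of the measure space $C(K)^*$, survives $\omega^\alpha$ iterations, so that the $\omega^\alpha$-th derived set is nonempty. This is merely the unravelling of the definition of the Szlenk index of an operator recalled in the Introduction, together with the fact that the index exceeding $\omega^\alpha$ forces non-termination of the derivation at stage $\omega^\alpha$ for some scale $\epsilon$.

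Next I would convert this persistence of derivatives into a genuinely combinatorial object. The standard mechanism attaches to a set whose $\epsilon$-derivations survive $\omega^\alpha$ steps a well-founded tree of points of $K_0$ whose immediate successors are weak$^*$-separated from one another, and from their predecessor, by at least $\epsilon$, the ordinal rank of the tree being controlled from below in terms of $\alpha$. Writing each such point as $\mu_t=T^*x_t^*$ with $x_t^*\in B_{X^*}$, I obtain a tree $(\mu_t)_{t\in\mathcal{T}}$ of measures on $K$ together with dual vectors $x_t^*$ that will later supply the lower estimate for $T$, since $\langle\mu_t,f\rangle=\langle x_t^*,Tf\rangle$ for every $f\in C(K)$.

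The crucial, and hardest, step is to pass from this separated tree of \emph{measures} to a tree of \emph{continuous functions} $(f_t)_{t\in\mathcal{T}}\subseteq B_{C(K)}$ whose closed linear span is isomorphic to $C[0,\omega^{\omega\cdot\alpha}]$. Exploiting the $\epsilon$-separation of the $\mu_t$, I would select the $f_t$ greedily along the tree so that each newly chosen function is almost orthogonal, in the relevant pairings, to those already chosen on incomparable branches; a Ramsey-type pruning then reshapes $\mathcal{T}$ so that the surviving functions are organized exactly like the canonical system of $C[0,\omega^{\omega\cdot\alpha}]$ and reproduce its supremum norm up to a uniform constant. The same pairings force $\|Tf\|\ge\epsilon'\|f\|$ for $f$ in the span, whence $T$ is bounded below there.

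The main obstacle is precisely this transition. The derivation produces an essentially $\ell_1$-flavoured object, namely weak$^*$-separated functionals, whereas the target $C[0,\omega^{\omega\cdot\alpha}]$ is $c_0$-flavoured, so the separated measures must be turned into disjointly acting bumps without collapsing the branching structure. Keeping the ordinal rank under control throughout the selection, so that the $\omega^\alpha$ surviving in the Szlenk derivation is upgraded to exactly the $\omega^{\omega\cdot\alpha}$ demanded in the conclusion, requires careful transfinite bookkeeping; this is where the bulk of the work resides, and where the interplay with Samuel's computation of the Szlenk index of $C[0,\omega^{\omega^\alpha}]$ calibrates the final constant.
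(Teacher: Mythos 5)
This theorem is not proved in the paper at all: it is quoted verbatim from Bourgain, with the remark that he first establishes it for compact metric~$K$ \cite[Proposition~3]{bourgain} and then reduces the general case to that one \cite[p.~107]{bourgain}. So your attempt can only be judged on its own merits, and on those merits it has a genuine gap. The first two stages of your plan are fine and essentially cost-free: unwinding the definition to get an $\epsilon>0$ whose Szlenk derivation of $T^*(B_{X^*})$ survives $\omega^\alpha$ iterations, and extracting from this a well-founded, $\epsilon$-separated tree of functionals $\mu_t=T^*x_t^*$ of controlled rank, are both standard. But the step you yourself flag as ``the crucial, and hardest, step'' --- passing from this $\ell_1$-flavoured tree of weak$^*$-separated measures to a system of functions $(f_t)$ in $B_{C(K)}$ whose closed span is isomorphic to $C[0,\omega^{\omega\cdot\alpha}]$ and on which $T$ is bounded below --- is the entire content of the theorem, and you do not carry it out. ``Select the $f_t$ greedily so that each is almost orthogonal to those already chosen'' and ``a Ramsey-type pruning then reshapes $\mathcal{T}$'' are not arguments: you specify neither the selection criterion, nor the pruning, nor why the surviving system satisfies the upper $c_0$-type estimate that the supremum norm of $C[0,\omega^{\omega\cdot\alpha}]$ requires (the separation of the $\mu_t$ gives lower estimates against functionals, which is the easy direction; the danger is that the $f_t$ pile up and the span degenerates). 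Likewise, the assertion that ``the same pairings force $\|Tf\|\ge\epsilon'\|f\|$ for $f$ in the span'' needs the family $(x_t^*)$ to norm the whole span uniformly through $T$, which is exactly what the (unspecified) construction must arrange.

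A second, quantitative gap is the ordinal bookkeeping. The exponent $\omega\cdot\alpha$ in the conclusion is not ``calibrated by Samuel's computation'' --- Samuel's result that $\Sz(C[0,\omega^{\omega^\alpha}])=\omega^{\alpha+1}$ only tells you what is the best one could hope for, and indeed $\omega^{\omega\cdot\alpha}=(\omega^\omega)^\alpha$ is in general far smaller than $\omega^{\omega^\alpha}$, reflecting a genuine loss in Bourgain's induction. The exponent comes out of a transfinite induction on $\alpha$ in which each additional power of $\omega$ in the Szlenk index buys one more ``gluing'' of $\omega^\omega$-indexed blocks; without setting up and running that induction you have no control over which ordinal interval your construction actually produces. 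In short, your proposal is a reasonable table of contents for Bourgain's proof, but the theorem-bearing load is concentrated precisely in the two places you leave blank.
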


\begin{remark}
\begin{romanenumerate}
\item Bourgain first proves Theorem~\ref{bourgainszlenk} in the case
  where~$K$ is a compact metric space \cite[Proposition~3]{bourgain},
  and he then explains how to deduce the result for general~$K$
  \cite[p.~107]{bourgain}.
\item Alspach~\cite{alspach} has shown that the most obvious
  strengthening of Bourgain's theorem is false by constructing a
  surjective operator~$T$ on~$C[0,\omega^{\omega^2}]$ such that~$T$
  does not fix an isomorphic copy of~$C[0,\omega^{\omega^2}]$. (The
  surjectivity of~$T$ implies that~$T$ has the same Szlenk index as
  its codomain, that is,~$\omega^3$.)
\end{romanenumerate}
\end{remark}

We shall use Bourgain's theorem in tandem with the following theorem
of Pe\l{}czy\'{n}ski~\cite[Theorem~1]{pelczynski3}, which will enable
us to infer that the identity operator on~$C(K)$, where~$K$ is a
compact metric space, factors through each operator which fixes an
isomorphic copy of~$C(K)$ and which has separable codomain.

\begin{theorem}[Pe\l{}czy\'{n}ski]%
  \label{pelczynskicomplementationthem} Let~$W$ be a closed subspace
  of a separable Banach space~$X,$ and suppose that~$W$ is isomorphic
  to~$C(K)$ for some compact metric space~$K$.  Then~$W$ contains a
  closed subspace which is isomorphic to~$C(K)$ and which is
  complemented in~$X$.
\end{theorem}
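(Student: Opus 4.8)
The plan is to prove the equivalent assertion that $W$ contains a subspace $V$, isomorphic to $C(K)$, which is the range of a bounded projection of $X$; by Lemma~\ref{operatorsfactoringID} (applied to the isometric inclusion of~$W$ into~$X$, which is trivially bounded below) this is precisely what is required. The starting point is the self-similarity of these spaces: for every infinite compact metric space~$K$ one has $C(K)\cong\big(\bigoplus_{n\in\N}C(K)\big)_{c_0}$, the $c_0$-direct sum of countably many copies of~$C(K)$. (For uncountable~$K$ this follows from Milutin's theorem, and for countable~$K$ from the Bessaga--Pe\l{}czy\'{n}ski classification together with the observation that the $c_0$-sum leaves the relevant Cantor--Bendixson class unchanged.) Transporting this decomposition along an isomorphism $C(K)\to W$, I would realize~$W$ internally as a $c_0$-sum $W=\clspa\{W_n:n\in\N\}$ of uniform copies $W_n\cong C(K)$, equipped with uniformly bounded coordinate projections $q_n\colon W\to C(K)$ and embeddings $j_n\colon C(K)\to W_n\subseteq X$ satisfying $q_mj_n=\delta_{mn}I_{C(K)}$. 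The decisive feature inherited from the $c_0$-sum is that $(q_n)$ is \emph{null in the strong operator topology}: $\|q_nw\|\to0$ for every $w\in W$.

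The core of the argument is then a Sobczyk-type extension carried out for a single, carefully selected coordinate. Since~$X$ is separable, $B_{X^*}$ is weak$^*$-compact and weak$^*$-metrizable, and by the Hahn--Banach theorem the restriction map $X^*\to W^*$ sends each ball onto the corresponding ball of~$W^*$. Writing each~$q_n$ through its weak$^*$-continuous symbol $\kappa\mapsto q_n^*\delta_\kappa\in W^*$, I would, after passing to a subsequence, extend one such symbol to a weak$^*$-continuous, uniformly bounded map $K\to X^*$, thereby obtaining an operator $\tilde q_{n_0}\colon X\to C(K)$ extending~$q_{n_0}$. The extension follows the pattern of Sobczyk's theorem: extend each functional by Hahn--Banach and then correct it by an element of~$W^\perp$ so that every weak$^*$-cluster point vanishes on~$W$; this restores the prescribed values on~$W$ while keeping the norm under control. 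Continuity of the correction over the metrizable index space~$K$ is secured by a continuous selection (Michael's theorem). Setting $R:=\tilde q_{n_0}$ and $S:=j_{n_0}$ then gives $RS=I_{C(K)}$, so that $SR$ is a bounded projection of~$X$ onto the copy $j_{n_0}(C(K))\subseteq W$, as required.

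The main obstacle is exactly this extension step, and it is delicate because $C(K)$ is \emph{not} separably injective when~$K$ is uncountable (then $C(K)\cong C[0,1]$): one cannot hope to extend the whole family $(q_n)_n$, for an extension of the entire isomorphism $W\to\big(\bigoplus_nC(K)\big)_{c_0}$ would exhibit~$W$ itself as complemented in~$X$, which is false in general. The point of the $c_0$-self-similarity is precisely to circumvent this: the strong-operator nullity of~$(q_n)$ forces the symbols $\kappa\mapsto q_n^*\delta_\kappa$ to become weak$^*$-small as $n\to\infty$, and it is only for a sufficiently advanced (or diagonally recombined) index that the competing demands of a uniform norm bound and of continuity over~$K$ can be met at once. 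Balancing these two requirements---the quantitative Sobczyk correction against the qualitative continuous selection---is where the real work lies; once a single copy has been extended, the passage to the projection, and hence via Lemma~\ref{operatorsfactoringID} to the factorization of~$I_{C(K)}$, is routine.
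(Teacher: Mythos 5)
First, a point of reference: the paper does not prove this statement at all --- it is quoted from Pe\l{}czy\'{n}ski's paper \cite{pelczynski3} --- so there is no internal proof to compare yours with, and your argument must stand on its own. It does not: there is a genuine gap at precisely the step you identify as ``where the real work lies'', and the mechanism you propose for closing it does not engage the actual obstruction. The Sobczyk correction succeeds for $c_0$ because the index set is $\N\cup\{\infty\}$ and weak$^*$-continuity is needed only at the single point $\infty$: one arranges that a \emph{sequence} of corrected Hahn--Banach extensions is weak$^*$-null by subtracting from each extension a nearby point of the weak$^*$-compact, metrizable set $W^\perp\cap\lambda B_{X^*}$. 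To extend $q_{n_0}$ when $K$ is uncountable you need the map $\kappa\mapsto\tilde q_{n_0}^*\delta_\kappa$ to be weak$^*$-continuous at \emph{every} point of $K$, that is, a globally weak$^*$-continuous selection of Hahn--Banach extensions over the compact set $\{q_{n_0}^*\delta_\kappa:\kappa\in K\}$; the non-existence of such selections is exactly the failure of separable injectivity of $C[0,1]$. Neither Michael's theorem (which selects continuously into a Banach space in its \emph{norm} topology, not into $(X^*,w^*)$, and whose lower-semicontinuity hypothesis you have not verified for $\phi\mapsto\{x^*\in\lambda B_{X^*}:x^*|_W=\phi\}$) nor the weak$^*$-smallness of the symbols $q_n^*\delta_\kappa$ as $n\to\infty$ bears on this: weak$^*$-smallness controls neither the size nor the continuity of the required $W^\perp$-valued corrections. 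Moreover, the statement you are implicitly aiming at --- that one of the coordinate summands $W_{n_0}$ of an internal $c_0$-decomposition of $W$ is itself complemented in $X$ --- is formally stronger than Pe\l{}czy\'{n}ski's theorem, and you offer no construction of the ``diagonal recombination'' that would be needed if no single coordinate works.

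For contrast, the known proof runs quite differently: one approximates $C(K)$ from inside by an increasing chain of subspaces isometric to $\ell_\infty^{k_n}$ spanned by peaked partitions of unity, and by a gliding-hump perturbation builds inside $W$ a copy of $C(K)$ spanned by such a system, together with functionals defined on all of $X$ obtained by Hahn--Banach extension at each finite stage and weak$^*$ sequential compactness of $\lambda B_{X^*}$ (this is where separability of $X$ enters); these functionals assemble into the projection. The complemented copy so obtained is a perturbed ``diagonal'' subspace of $W$, not one of the given coordinates, which is why your single-coordinate strategy is unlikely to be salvageable without essentially redoing that construction.
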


To conclude this section, we shall state some results about the Banach
spaces~$C_0[0,\omega_1)$ and~$C_0(L_0)$ that will be required in the
proof of Theorem~\ref{mainC0L0thm}.  As in~\cite{KKL}, it turns out to
be convenient to work with an alternative representation of the Banach
space~$C_0(L_0)$, stated in
Lemma~\ref{factsaboutEomega1C0omega1}\romanref{EAcongEomega1i}, below.
This relies on the following piece of notation. Denote by
\[ \Bigl(\bigoplus_{j\in J} X_j\Bigr)_{c_0} = \bigl\{ (x_j)_{j\in J} :
x_j\in X_j\ (j\in J)\ \text{and}\ \{j\in J : \| x_j\|\ge\epsilon\}\
\text{is finite}\ (\epsilon>0)\bigr\}
\]
the $c_0$-direct sum of a family $(X_j)_{j\in J}$ of Banach spaces,
and set $E_{\omega_1} = \bigl(\bigoplus_{\alpha<\omega_1} C[0,\alpha]
\bigr)_{c_0}$ and, more generally, $E_A = \bigl(\bigoplus_{\alpha\in
  A} C[0,\alpha]\bigr)_{c_0}$ for each non-empty subset~$A$
of~$[0,\omega_1)$.

\begin{lemma}\label{factsaboutEomega1C0omega1}
%{EAcongEomega1}{factsaboutC0omega1}
  \begin{romanenumerate}
  \item\label{EAcongEomega1i} The Banach spaces $C_0(L_0)$ and
    $E_{\omega_1}$ are isometrically isomorphic.
  \item\label{factsaboutC0omega1iii} The Banach space~$E_{\omega_1}$ is
    isomorphic to the $c_0$-direct sum of countably many copies of
    itself: $E_{\omega_1}\cong c_0(\N,E_{\omega_1})$.
  \item\label{EAcongEomega1iii} Let~$A$ be an uncountable subset
    of~$[0,\omega_1)$. Then~$E_A$ is isomorphic to~$ E_{\omega_1}$.
  \item\label{factsaboutC0omega1i} The Banach space~$C_0[0,\omega_1)$
    contains a closed, complemented subspace which is isomorphic
    to~$E_{\omega_1}$.
  \item\label{factsaboutC0omega1ii} The following three conditions are
    equivalent for each operator~$T$ on~$C_0[0,\omega_1)\colon$
    \begin{alphenumerate}
    \item\label{factsaboutC0omega1iia} $T$ belongs to the Loy--Willis
      ideal~$\mathscr{M};$
    \item\label{factsaboutC0omega1iib} the range of~$T$ is contained
      in a closed, complemented subspace which is isomorphic
      to~$E_{\omega_1};$
    \item\label{factsaboutC0omega1iic} the identity operator
      on~$C_0[0,\omega_1)$ does not factor through~$T$.
    \end{alphenumerate}
  \end{romanenumerate}
\end{lemma}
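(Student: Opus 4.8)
The plan is to treat the five parts in the order \romanref{EAcongEomega1i}, \romanref{factsaboutC0omega1iii}, \romanref{EAcongEomega1iii}, \romanref{factsaboutC0omega1i}, \romanref{factsaboutC0omega1ii}, since each draws on its predecessors. Part~\romanref{EAcongEomega1i} is a direct topological identification. By construction~$L_0$ is the topological disjoint union of the compact ordinal intervals $K_\alpha=[0,\alpha]\times\{\alpha+1\}$, each of which is clopen in~$L_0$, and a subset of~$L_0$ is compact precisely when it is closed and meets only finitely many of the~$K_\alpha$. Hence the restriction map $f\mapsto(f|_{K_\alpha})_{\alpha<\omega_1}$ carries~$C_0(L_0)$ onto the family of those $(f_\alpha)$ with $f_\alpha\in C(K_\alpha)\cong C[0,\alpha]$ for which $\{\alpha:\|f_\alpha\|_\infty\ge\varepsilon\}$ is finite for every $\varepsilon>0$; this is exactly~$E_{\omega_1}$, and the map is isometric because the supremum norm on~$C_0(L_0)$ is the supremum of the norms over the pieces. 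I expect no difficulty here beyond making the ``compact $\Leftrightarrow$ finitely many pieces'' observation precise.

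For parts~\romanref{factsaboutC0omega1iii} and~\romanref{EAcongEomega1iii} I would first pass to a normal form via the Bessaga--Pe\l{}czy\'{n}ski isomorphic classification of the spaces~$C[0,\alpha]$. Writing $D_\xi=C[0,\omega^{\omega^\xi}]$, every $C[0,\alpha]$ with $\alpha$ in the band $[\omega^{\omega^\xi},\omega^{\omega^{\xi+1}})$ is isomorphic to~$D_\xi$, and---crucially---each such band is a \emph{countable} set of ordinals, so an uncountable $A\subseteq[0,\omega_1)$ must meet an uncountable, hence cofinal, set $\Xi\subseteq[0,\omega_1)$ of bands. Grouping the summands of~$E_{\omega_1}$ band by band (the finite-dimensional summands being harmlessly absorbed) and collapsing countable multiplicities through the classical isomorphism $c_0(\N,D_\xi)\cong D_\xi$, which follows from the clopen decomposition of $[0,\omega^{\omega^\xi}\cdot\omega]$ together with the fact that $\omega^{\omega^\xi}\cdot\omega$ lies in the same band, I obtain $E_{\omega_1}\cong(\bigoplus_{\xi<\omega_1}D_\xi)_{c_0}$ and, by the same argument, $E_A\cong(\bigoplus_{\xi\in\Xi}D_\xi)_{c_0}$. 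Part~\romanref{factsaboutC0omega1iii} then drops out, because re-expanding each $D_\xi$ as $c_0(\N,D_\xi)$ shows $(\bigoplus_{\xi<\omega_1}D_\xi)_{c_0}\cong c_0\bigl(\N,(\bigoplus_{\xi<\omega_1}D_\xi)_{c_0}\bigr)$; the identical computation also yields $E_A\cong c_0(\N,E_A)$, which I shall need below.

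Part~\romanref{EAcongEomega1iii} I would settle by Pe\l{}czy\'{n}ski's decomposition method, for which two complementation statements are required. One direction is immediate: $E_A$ is the range of the coordinate projection of~$E_{\omega_1}$ onto the summands indexed by~$A$, so $E_A$ is $1$-complemented in~$E_{\omega_1}$. For the reverse direction I would enumerate $\Xi$ increasingly as $\{\xi_\nu:\nu<\omega_1\}$; being an uncountable subset of~$[0,\omega_1)$ it has order type~$\omega_1$, so $\xi_\nu\ge\nu$, and the map $D_\nu\hookrightarrow D_{\xi_\nu}$ given by extending a function on the clopen subinterval $[0,\omega^{\omega^\nu}]$ by zero (with restriction as the norm-one projection) assembles over the disjoint summands into an isometric, $1$-complemented embedding of $(\bigoplus_{\nu<\omega_1}D_\nu)_{c_0}\cong E_{\omega_1}$ into $E_A$. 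Since both spaces satisfy the self-similarity $X\cong c_0(\N,X)$, the decomposition method then gives $E_A\cong E_{\omega_1}$. I regard this reverse embedding---selecting summands of~$E_A$ injectively and cofinally while keeping the embedding and projection constants uniformly bounded---as the main obstacle of the whole lemma.

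Finally, parts~\romanref{factsaboutC0omega1i} and~\romanref{factsaboutC0omega1ii} I would deduce from the established theory of the Loy--Willis ideal developed in \cite{KL,KKL}. For~\romanref{factsaboutC0omega1i}, decomposing $[0,\omega_1)$ along a cofinal increasing sequence of clopen intervals produces a norm-one projection of~$C_0[0,\omega_1)$ onto a $c_0$-direct sum of spaces $C(\,\cdot\,)$ indexed cofinally by~$[0,\omega_1)$, which by the band analysis above is isomorphic to~$E_{\omega_1}$; the attendant bookkeeping at limit ordinals is carried out in~\cite{KKL}, from which I would simply quote the result. For the equivalences in~\romanref{factsaboutC0omega1ii}, the implication \alphref{factsaboutC0omega1iia}$\Leftrightarrow$\alphref{factsaboutC0omega1iic} follows from the theorem, quoted in the introduction, that~$\mathscr{M}$ is the unique maximal ideal of~$\mathscr{B}(C_0[0,\omega_1))$: the identity operator factors through~$T$ exactly when the ideal generated by~$T$ is all of~$\mathscr{B}(C_0[0,\omega_1))$---here one compresses a finite factorization to a single one using $C_0[0,\omega_1)\cong C_0[0,\omega_1)\oplus C_0[0,\omega_1)$---and this fails precisely when~$T\in\mathscr{M}$. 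The equivalence with~\alphref{factsaboutC0omega1iib} I would obtain by combining the factorization description~\eqref{MfactoringthroughC0L0} of~$\mathscr{M}$ with part~\romanref{factsaboutC0omega1i} and Lemma~\ref{operatorsfactoringID}, as in~\cite{KKL}.
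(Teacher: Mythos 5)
Your treatments of parts (i), (iv) and (v) essentially match the paper's, which handles (i) by the same elementary identification and (iv), (v) by citation to \cite{KL,KKL}; but your proofs of (ii) and (iii) contain a genuine gap at the band-collapse step. You claim that grouping the summands of $E_{\omega_1}$ band by band and ``collapsing countable multiplicities through $c_0(\N,D_\xi)\cong D_\xi$'' yields $E_{\omega_1}\cong\bigl(\bigoplus_{\xi<\omega_1}D_\xi\bigr)_{c_0}$. This tacitly applies the Bessaga--Pe\l{}czy\'{n}ski isomorphisms $C[0,\alpha]\cong D_\xi$, for $\alpha$ ranging over the band $B_\xi=[\omega^{\omega^\xi},\omega^{\omega^{\xi+1}})$, coordinatewise across an infinite $c_0$-sum, which requires their constants to be uniformly bounded in $\alpha$ --- and they are not. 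Indeed, $\bigl(\bigoplus_{\alpha\in B_\xi}C[0,\alpha]\bigr)_{c_0}=C_0\bigl(\bigsqcup_{\alpha\in B_\xi}[0,\alpha]\bigr)$, and the one-point compactification of this disjoint union is an ordinal interval $[0,\gamma]$ with $\gamma=\omega^{\omega^{\xi+1}}$ (the band is cofinal in $\omega^{\omega^{\xi+1}}$, which is additively indecomposable), so the sum over a full band is isomorphic to $D_{\xi+1}$, not $D_\xi$; its Szlenk index is $\omega^{\xi+2}$ rather than $\omega^{\xi+1}$, which also shows a posteriori that the isomorphism constants within a band must blow up. Since your proof of (ii) and both normal forms $E_{\omega_1}\cong\bigl(\bigoplus_{\xi<\omega_1}D_\xi\bigr)_{c_0}$ and $E_A\cong\bigl(\bigoplus_{\xi\in\Xi}D_\xi\bigr)_{c_0}$ rest on this step, parts (ii) and (iii) are unproved as written, and (ii) cannot be recovered from (iii) because your (iii) invokes (ii) through the decomposition method.

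The repair for (iii) is the paper's own, much softer argument, which needs no classification theorem at all: $E_A$ is $1$-complemented in $E_{\omega_1}$ via the coordinate projection, and conversely, enumerating $A$ increasingly as $\{\alpha_\nu:\nu<\omega_1\}$ gives $\alpha_\nu\ge\nu$, so extension by zero from the clopen initial segment $[0,\nu]\subseteq[0,\alpha_\nu]$ assembles into an isometric $1$-complemented embedding of $E_{\omega_1}$ into $E_A$; then (ii) and Pe\l{}czy\'{n}ski's decomposition method finish, exactly as you intended. For (ii) itself the paper simply quotes \cite[Lemma~2.12]{KKL}; a self-contained argument must reindex via identifications with uniformly bounded constants (homeomorphisms of ordinal intervals, one-point compactifications), not the non-uniform within-band isomorphisms. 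A secondary, smaller gap: in (v), deducing the equivalence of (a) and (c) from the mere uniqueness of the maximal ideal requires compressing $I=\sum_{i=1}^{n}S_iTR_i$ into a single factorization $I=STR$, and $C_0[0,\omega_1)\cong C_0[0,\omega_1)\oplus C_0[0,\omega_1)$ only yields that $I$ factors through $\operatorname{diag}(T,\dots,T)$, not through $T$ itself; this equivalence is really the substance of \cite[Theorem~1.2]{KL} and should be quoted as such, as the paper does.
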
 
\begin{proof} Clause~\romanref{EAcongEomega1i} is a special instance
  of a well-known elementary fact (see, \emph{e.g.}, \cite[p.~191,
  Exercise 9]{conway}), while clauses~\romanref{factsaboutC0omega1iii}
  and~\romanref{factsaboutC0omega1i} are \cite[Lemma~2.12 and
  Corollary~2.16]{KKL}, respectively.

  To prove clause~\romanref{EAcongEomega1iii}, we observe that the
  Banach spaces~$E_{\omega_1}$ and~$E_A$ contain complemented co\-pies
  of each other.  By~\romanref{factsaboutC0omega1iii}, the
  Pe\l{}czy\'{n}ski decomposition method (as stated in
  \cite[Theorem~2.23(b)]{ak}, for instance) applies, and hence the
  conclusion follows.

  \romanref{factsaboutC0omega1ii}. The equivalence of
  conditions~\alphref{factsaboutC0omega1iia}
  and~\alphref{factsaboutC0omega1iic} is \cite[Theorem~1.2]{KL} (or
  \cite[Theorem~1.6, (a)$\Leftrightarrow$(h)]{KKL}), while the
  equivalence of conditions~\alphref{factsaboutC0omega1iia}
  and~\alphref{factsaboutC0omega1iib} follows from
  \cite[Theorem~1.6]{KKL}; more precisely,
  \alphref{factsaboutC0omega1iib}
  implies~\alphref{factsaboutC0omega1iia} by
  \cite[Theorem~1.6(d)$\Rightarrow$(a)]{KKL}, and the converse can be
  shown as in the proof of \cite[Theorem~1.6(c)$\Rightarrow$(d)]{KKL}.
\end{proof}

For each countable ordinal~$\alpha$, let~$P_\alpha$ be the norm-one
projection on~$C_0[0,\omega_1)$ given by $P_\alpha f = f\cdot
\mathbf{1}_{[0,\alpha]}$ for each $f\in C_0[0,\omega_1)$,
where~$\mathbf{1}_{[0,\alpha]}$ denotes the characteristic function of
the ordinal interval~$[0,\alpha]$.

\begin{lemma}\label{seprangefromEomega1}
  \begin{romanenumerate}
  \item\label{seprangefromEomega1i} A subspace of~$C_0[0,\omega_1)$ is
    separable if and only if it is contained in the range of the
    projection~$P_\alpha$ for some countable ordinal~$\alpha$.
  \item\label{seprangefromEomega1ii} Let $T$ be an operator
    from~$C_0[0,{\omega_1})$ into a Banach space~$X$. Then $T$ has
    separable range if and only if $T = TP_\alpha$ for some countable
    ordinal~$\alpha$.
\end{romanenumerate}
\end{lemma}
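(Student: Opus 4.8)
The plan is to treat the two clauses in turn and to derive both from the single structural observation that each function in~$C_0[0,\omega_1)$ vanishes on a tail~$(\alpha,\omega_1)$ for some countable~$\alpha$. This is immediate from the definition of~$C_0[0,\omega_1)$: given $f\in C_0[0,\omega_1)$, the set $\{\beta : |f(\beta)|\ge 1/m\}$ is compact, hence bounded, for each $m\in\N$, say contained in~$[0,\beta_m]$; putting $\alpha=\sup_{m}\beta_m$, which is countable as a countable supremum of countable ordinals, we find that $f(\beta)=0$ whenever $\beta>\alpha$, so that $f=P_\alpha f$.

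For clause~\romanref{seprangefromEomega1i}, the backward implication is routine: the range of~$P_\alpha$ is isometrically isomorphic to~$C[0,\alpha]$ via restriction to~$[0,\alpha]$, and the latter is separable because~$[0,\alpha]$ is a countable, compact metric space; hence every subspace of~$\operatorname{ran}P_\alpha$ is separable. For the forward implication, I would take a countable dense subset $\{f_n : n\in\N\}$ of the given separable subspace~$Y$, apply the structural observation to obtain countable ordinals~$\alpha_n$ with $f_n=P_{\alpha_n}f_n$, and set $\alpha=\sup_n\alpha_n<\omega_1$. Since~$\operatorname{ran}P_\alpha$ is a closed subspace containing each~$f_n$, it contains $\overline{\{f_n : n\in\N\}}\supseteq Y$.

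Clause~\romanref{seprangefromEomega1ii} is the substantial part. The backward implication again follows quickly, because if $T=TP_\alpha$ then $\operatorname{ran}T=T(\operatorname{ran}P_\alpha)$ is the continuous image of the separable space~$\operatorname{ran}P_\alpha$. For the forward implication the difficulty is that, although each individual~$f$ satisfies $Tf=TP_\alpha f$ once~$\alpha$ exceeds the support bound of~$f$, one must produce a \emph{single} ordinal~$\alpha$ that works simultaneously for every~$f$; equivalently, $T$ must annihilate the whole (non-separable) subspace of functions supported in the tail~$(\alpha,\omega_1)$. My plan is to pass to adjoints. Writing $Z=\overline{\operatorname{ran}T}$, which is separable by hypothesis, I would use the Hahn--Banach theorem to select a countable family $(\phi_k)_{k\in\N}$ in the dual of the codomain whose restrictions to~$Z$ separate the points of~$Z$ (choosing each~$\phi_k$ to norm an element of a countable dense subset of~$Z$). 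By the Riesz representation theorem each functional $T^*\phi_k$ on~$C_0[0,\omega_1)$ is given by a finite regular Borel measure~$\mu_k$, and the same supremum-of-countably-many-witnesses argument as above --- applied now to the decreasing sets~$(\alpha,\omega_1)$ and using $|\mu_k|\bigl((\alpha,\omega_1)\bigr)\to 0$ --- shows that~$\mu_k$ is supported on~$[0,\alpha_k]$ for some countable~$\alpha_k$. Setting $\alpha=\sup_k\alpha_k<\omega_1$, every~$\mu_k$ is supported on~$[0,\alpha]$, so that
\[
\phi_k\bigl(T(f-P_\alpha f)\bigr)=\int_{[0,\omega_1)}(f-P_\alpha f)\,d\mu_k=0\qquad(k\in\N,\ f\in C_0[0,\omega_1)),
\]
because $f-P_\alpha f$ vanishes on~$[0,\alpha]$. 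As $(\phi_k)$ separates the points of~$Z\ni T(f-P_\alpha f)$, this forces $Tf=TP_\alpha f$ for every~$f$, that is, $T=TP_\alpha$.

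The main obstacle is exactly this passage from the pointwise statement to a uniform~$\alpha$ in clause~\romanref{seprangefromEomega1ii}; the key idea resolving it is that separability of the range manufactures a countable separating family of functionals, each of which, being represented by a measure of bounded support, confines the relevant action of~$T$ to a common countable initial segment.
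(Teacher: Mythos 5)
Your proof is correct, but for clause~(ii) it takes a genuinely different route from the paper. The paper argues by contradiction entirely on the Banach-space side: assuming $T\neq TP_\alpha$ for every countable~$\alpha$, it transfinitely constructs $\omega_1$ disjointly supported norm-one functions $f_\alpha$ with $Tf_\alpha\neq 0$, then uses two pigeonhole steps (first to find uncountably many $\alpha$ with $\|Tf_\alpha\|\geq 1/n_0$, then to find uncountably many whose images cluster within $1/(3n_0)$ of a single point of a countable dense subset of the range) and sums $k$ of the corresponding $f_{\alpha_j}$ --- still of norm one by disjointness of supports --- to violate the boundedness of~$T$. Your argument instead dualizes: separability of the range yields a countable family of functionals separating the points of $\overline{\operatorname{ran}T}$, each $T^*\phi_k$ is a regular Borel measure on $[0,\omega_1)$ whose support is confined to a countable initial segment (by inner regularity plus the countable-supremum trick), and a common bound $\alpha$ for these supports forces $T(I-P_\alpha)=0$. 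Both proofs are sound; the paper's is self-contained and uses only the lattice/norm structure of $C_0[0,\omega_1)$ (no description of the dual space is needed), whereas yours invokes the Riesz representation theorem but is arguably more transparent about \emph{why} a single $\alpha$ works --- the obstruction lives in countably many measures, each with bounded support. Your treatment of clause~(i), which the paper simply cites from an earlier reference, is also correct.
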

\begin{proof}
  Clause~\romanref{seprangefromEomega1i} is a special case of
  \cite[Lemma~4.2]{KL}.

  \romanref{seprangefromEomega1ii}.  The implication $\Leftarrow$ is
  clear because~$P_\alpha$ has separable range.

  We shall prove the converse by contradiction. Assume that~$T$ has
  separable range and that $T\neq TP_\alpha$ for each
  $\alpha<\omega_1$. Since each element of~$C_0[0,{\omega_1})$ has
  countable support, we may inductively construct a transfinite
  sequence of disjointly supported functions
  $(f_\alpha)_{\alpha<\omega_1}$ in~$C_0[0,{\omega_1})$ such that
  $Tf_\alpha\ne 0$ and $\|f_\alpha\| = 1$ for each
  $\alpha<\omega_1$. Set \[ A(n) = \Bigl\{ \alpha\in[0,\omega_1) : \|
  Tf_\alpha\|\ge\frac{1}{n} \Bigr\}\qquad (n\in\N). \] Since
  $[0,\omega_1) = \bigcup_{n\in\N} A(n)$, we can find $n_0\in\N$ such
  that~$A(n_0)$ is uncountable.

  Take a sequence $(x_m)_{m\in\N}$ which is dense in the range of~$T$,
  and set
  \[ B(m) = \Bigl\{ \alpha\in A(n_0) : \| x_m - Tf_\alpha\|\le
  \frac{1}{3n_0} \Bigr\}\qquad (m\in\N). \] Then, as $A(n_0) =
  \bigcup_{m\in\N} B(m)$, we deduce that~$B(m_0)$ is uncountable for
  some $m_0\in\N$.  Note that $\| x_{m_0}\|\ge2/(3n_0)$.  Now choose
  an integer~$k$ such that $k>3n_0\|T\|$, and take~$k$ distinct
  ordinals $\alpha_1,\ldots,\alpha_k\in B(m_0)$. Since the function
  $\sum_{j=1}^k f_{\alpha_j}\in C_0[0,\omega_1)$ has norm one, we
  conclude that
  \[ \|T\|\ge \biggl\|\sum_{j=1}^k Tf_{\alpha_j} \biggr\|\ge
  k\|x_{m_0}\| - \sum_{j=1}^k \|x_{m_0} -
  Tf_{\alpha_j}\|\ge\frac{k}{3n_0}> \|T\|, \] which is clearly absurd.
\end{proof}

For each countable ordinal~$\alpha$, let~$Q_\alpha$ be the canonical
projection of~$E_{\omega_1}$ onto the first~$\alpha$ summands; that
is, the $\beta^{\text{th}}$ coordinate of
$Q_\alpha(f_\gamma)_{\gamma<\omega_1}$, where $\beta<\omega_1$ and
\mbox{$(f_\gamma)_{\gamma<\omega_1}\in E_{\omega_1}$}, is given
by~$f_\beta$ if $\beta\le\alpha$ and~$0$ otherwise. We then have the
following counterpart of
Lemma~\ref{seprangefromEomega1}\romanref{seprangefromEomega1i}.

\begin{lemma}\label{sepsubspaceofEomega}
  A subspace of~$E_{\omega_1}$ is separable if and only if it is
  contained in the range of the projection~$Q_\alpha$ for some
  countable ordinal~$\alpha$.
\end{lemma}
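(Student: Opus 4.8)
The plan is to imitate the proof of Lemma~\ref{seprangefromEomega1}\romanref{seprangefromEomega1i}, the decisive feature being that every element of the $c_0$-direct sum~$E_{\omega_1}$ has countable support. I would treat the two implications separately, beginning with the easier one.

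For the implication ``$\Leftarrow$'', I would note that the range of~$Q_\alpha$ is isometrically isomorphic to the $c_0$-direct sum $\bigl(\bigoplus_{\beta\le\alpha} C[0,\beta]\bigr)_{c_0}$. As~$\alpha$ is countable, this is a $c_0$-direct sum of countably many separable Banach spaces---each~$C[0,\beta]$ being separable because~$[0,\beta]$ is a countable compact metric space---and such a sum is itself separable. Hence any subspace contained in the range of~$Q_\alpha$ is separable.

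For the converse ``$\Rightarrow$'', suppose that~$W$ is a separable subspace of~$E_{\omega_1}$, and fix a sequence $(w_n)_{n\in\N}$ which is dense in~$W$. Writing $w_n = (w_n^\gamma)_{\gamma<\omega_1}$, the defining property of the $c_0$-direct sum shows that $\{\gamma<\omega_1 : w_n^\gamma\ne 0\} = \bigcup_{k\in\N}\{\gamma<\omega_1 : \|w_n^\gamma\|\ge 1/k\}$ is a countable union of finite sets, and therefore countable; consequently $S := \bigcup_{n\in\N}\{\gamma<\omega_1 : w_n^\gamma\ne 0\}$ is a countable union of countable sets, and so is itself countable. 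The one point requiring any care---and the crux of the argument---is that $\alpha := \sup S$ is then a \emph{countable} ordinal, which is exactly the assertion that~$\omega_1$ has uncountable cofinality. Granting this, each~$w_n$ is supported in~$[0,\alpha]$, so that $Q_\alpha w_n = w_n$; since the range of the projection~$Q_\alpha$ is closed and the sequence $(w_n)_{n\in\N}$ is dense in~$W$, I would conclude that~$W$ is contained in the range of~$Q_\alpha$. No further obstacle arises; the argument is entirely parallel to, and marginally simpler than, that of Lemma~\ref{seprangefromEomega1}\romanref{seprangefromEomega1i}.
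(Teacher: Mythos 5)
Your argument is correct and follows essentially the same route as the paper's: both directions rest on the countable support of elements of the $c_0$-direct sum, a countable dense subset, and the regularity (uncountable cofinality) of $\omega_1$ to bound the supports by a single countable ordinal~$\alpha$. Your version merely spells out a few steps the paper leaves implicit, such as why the supports are countable and why density plus closedness of the range of~$Q_\alpha$ suffices to capture all of~$W$.
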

\begin{proof}
  The implication $\Leftarrow$ is clear.

  Conversely, suppose that $W$ is a separable subspace
  of~$E_{\omega_1}$, and let~$D$ be a countable, dense subset
  of~$W$. Since each element of~$E_{\omega_1}$ has countable support,
  for each $x\in D$, we can choose a countable ordinal~$\beta(x)$ such
  that $x = Q_{\beta(x)}x$. Then the ordinal $\alpha = \sup_{x\in
    D}\beta(x)$ is countable and satisfies $x = Q_\alpha x$ for each
  $x\in D$, and hence $W\subseteq Q_\alpha[E_{\omega_1}]$.
\end{proof}

\section{Proofs of Theorem~\ref{mainC0L0thm} and
  Corollaries~\ref{theC0L0singularideal}--\ref{C0L0primary}}
\noindent
We are now ready to prove the results stated in
Section~\ref{section1}. We begin with a lemma which, in the light of
Lemma~\ref{factsaboutEomega1C0omega1}\romanref{EAcongEomega1i}, above,
effectively establishes Theorem~\ref{mainC0L0thm} in the case where $X
= C_0[0,\omega_1)$ and $Y = C_0(L_0)$.

\begin{lemma}\label{inductivesteplemma}
  Let~$T\colon C_0[0,\omega_1)\to E_{\omega_1}$ be an operator with
  uncountable Szlenk index. Then:
  \begin{romanenumerate}
  \item\label{inductivesteplemma1} for each pair $(\alpha,\eta)$ of
    countable ordinals, there exist operators $R\colon C[0,\alpha]\to
    C_0[0,\omega_1)$ and \mbox{$S\colon E_{\omega_1}\to C[0,\alpha]$}
    and a countable ordinal $\xi > \eta$ such that the diagram
    \[ \spreaddiagramrows{4ex}\spreaddiagramcolumns{-4.5ex}%
    \xymatrix{%
      & C[0,\alpha]\ar^-{\displaystyle{I_{C[0,\alpha]}}}[rrrrrrrr]%
      \ar_-{\displaystyle{R}}[ld]%
      &&&&&&&& C[0,\alpha]\\
      C_0[0,\omega_1)\ar_-{\displaystyle{P_\xi - P_\eta}}[rd]
      &&&&&&&&&&
      \quad\ E_{\omega_1}\quad\ \ar_-{\displaystyle{S}}[lu]\\
      & C_0[0,\omega_1) \ar^-{\displaystyle{T}}[rrrrrrrr] &&&&&&&&
      E_{\omega_1} \ar_-{\displaystyle{Q_\xi - Q_\eta}}[ru]} \] is
    commutative;
  \item\label{inductivesteplemma2} the identity operator
    on~$E_{\omega_1}$ factors through~$T$.
  \end{romanenumerate}
\end{lemma}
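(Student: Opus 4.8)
The plan is to recognise the band operators in~\romanref{inductivesteplemma1} as ``corners'' of a single operator of uncountable Szlenk index, to localise that index to a countable initial segment, and then to feed the result into the theorems of Bourgain and Pe\l czy\'nski. Fix the pair $(\alpha,\eta)$ and put $\tilde T_\eta = (I-Q_\eta)\,T\,(I-P_\eta)$ (identities on the respective spaces). Since $Q_\xi(I-Q_\eta)=Q_\xi-Q_\eta$ and $(I-P_\eta)P_\xi=P_\xi-P_\eta$ for $\xi\ge\eta$, we have
\[ (Q_\xi-Q_\eta)\,T\,(P_\xi-P_\eta)=Q_\xi\,\tilde T_\eta\,P_\xi\qquad(\xi\ge\eta), \]
so the band operators are exactly the corners $Q_\xi\tilde T_\eta P_\xi$. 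First I would note that $\tilde T_\eta$ inherits the uncountable Szlenk index of~$T$: expanding $T$ along $Q_\eta+(I-Q_\eta)$ and $P_\eta+(I-P_\eta)$ shows that $T-\tilde T_\eta$ has separable range, hence factors through some $Q_\gamma[E_{\omega_1}]$ (Lemma~\ref{sepsubspaceofEomega}), which has countable Szlenk index; thus $T-\tilde T_\eta\in\mathscr{S\!Z}_\gamma$ for some countable~$\gamma$, and Brooker's ideal property (Theorem~\ref{brookerthm}) forces $\tilde T_\eta$ to have uncountable Szlenk index.

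The crux of~\romanref{inductivesteplemma1} is the localisation identity
\[ \Sz(\tilde T_\eta)=\sup_{\xi<\omega_1}\Sz\bigl(Q_\xi\tilde T_\eta P_\xi\bigr). \]
Here ``$\geq$'' is immediate, while ``$\leq$'' I would deduce from the separable determinacy of the Szlenk index combined with Lemma~\ref{seprangefromEomega1}\romanref{seprangefromEomega1i}: every separable subspace of $C_0[0,\omega_1)$ sits inside some $P_\xi[C_0[0,\omega_1)]$, so $\Sz(\tilde T_\eta)=\sup_\xi\Sz(\tilde T_\eta P_\xi)$; and because $\tilde T_\eta P_\xi$ has separable range it equals $(Q_\zeta\tilde T_\eta P_\zeta)P_\xi$ for a suitable $\zeta\ge\xi$ (Lemma~\ref{sepsubspaceofEomega}), whence $\Sz(\tilde T_\eta P_\xi)\leq\Sz(Q_\zeta\tilde T_\eta P_\zeta)$ by the ideal property. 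As each corner has countable Szlenk index, the supremum is uncountable, so the ordinals $\Sz(Q_\xi\tilde T_\eta P_\xi)$ are cofinal in $[0,\omega_1)$. Choosing a countable~$\beta$ with $\alpha\leq\omega^{\omega\cdot\beta}$ and then $\xi>\eta$ with $\Sz(Q_\xi\tilde T_\eta P_\xi)>\omega^\beta$, I would view the band operator on the $C(K)$-space $(P_\xi-P_\eta)[C_0[0,\omega_1)]$ and apply Bourgain's theorem (Theorem~\ref{bourgainszlenk}) to fix a copy of $C[0,\omega^{\omega\cdot\beta}]$, hence of its complemented subspace $C[0,\alpha]$. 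Since the range lies in the separable space $Q_\xi[E_{\omega_1}]$, Pe\l czy\'nski's theorem (Theorem~\ref{pelczynskicomplementationthem}) upgrades this to a copy with image complemented in $Q_\xi[E_{\omega_1}]$, and thus in $E_{\omega_1}$; Lemma~\ref{operatorsfactoringID} then produces $R$ and~$S$ making the diagram commute.

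For~\romanref{inductivesteplemma2} I would iterate~\romanref{inductivesteplemma1} transfinitely, setting $\eta_0=0$, $\eta_{\nu+1}=\xi_\nu$, and $\eta_\lambda=\sup_{\nu<\lambda}\xi_\nu$ at limits, and applying~\romanref{inductivesteplemma1} to $(\alpha_\nu,\eta_\nu)$ with $\alpha_\nu\geq\eta_\nu$, so that $A=\{\alpha_\nu:\nu<\omega_1\}$ is uncountable. The decisive feature is that the off-diagonal interactions can be annihilated: Lemma~\ref{seprangefromEomega1}\romanref{seprangefromEomega1ii} applied to the separable-range operator $(Q_{\xi_\nu}-Q_{\eta_\nu})T$ gives a countable $\rho_\nu$ with $(Q_{\xi_\nu}-Q_{\eta_\nu})T(I-P_{\rho_\nu})=0$, and Lemma~\ref{sepsubspaceofEomega} gives a countable $\zeta_\nu$ with $T(P_{\xi_\nu}-P_{\eta_\nu})[C_0[0,\omega_1)]\subseteq Q_{\zeta_\nu}[E_{\omega_1}]$. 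Exploiting the cofinality of the admissible~$\xi$ established above, I would choose the $\xi_\nu$ increasing so fast that $\eta_\nu\geq\sup_{\mu<\nu}\max(\rho_\mu,\zeta_\mu)$; then $(Q_{\xi_\nu}-Q_{\eta_\nu})T(P_{\xi_\mu}-P_{\eta_\mu})=0$ for all $\mu\neq\nu$. After rescaling so that the factorisation constants $\|R_\nu\|\,\|S_\nu\|$ are uniformly bounded, the disjointness of supports lets me assemble bounded operators $\mathcal R\colon E_A\to C_0[0,\omega_1)$, $(g_\nu)_\nu\mapsto\sum_\nu R_\nu g_\nu$, and $\mathcal S\colon E_{\omega_1}\to E_A$, $y\mapsto(S_\nu(Q_{\xi_\nu}-Q_{\eta_\nu})y)_\nu$; the vanishing of the cross terms yields $\mathcal S T\mathcal R=I_{E_A}$. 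Finally $E_A\cong E_{\omega_1}$ by Lemma~\ref{factsaboutEomega1C0omega1}\romanref{EAcongEomega1iii}, and conjugating by this isomorphism shows that $I_{E_{\omega_1}}$ factors through~$T$.

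I expect the main obstacle in~\romanref{inductivesteplemma1} to be the localisation identity, whose ``$\leq$'' direction depends on the separable determinacy of the Szlenk index; once it is in hand, the rest is a routine passage through Bourgain's and Pe\l czy\'nski's theorems. In~\romanref{inductivesteplemma2} the delicate points are the simultaneous annihilation of all cross terms---achieved by interleaving the two separable-range lemmas with a sufficiently rapid choice of the~$\xi_\nu$---and securing a uniform bound on the factorisation constants, without which the assembled maps $\mathcal R$ and $\mathcal S$ need not be bounded.
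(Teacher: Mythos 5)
Your overall architecture is the same as the paper's --- pass to the band operator $\tilde T_\eta=(I-Q_\eta)T(I-P_\eta)$, feed it into Bourgain's and Pe\l{}czy\'{n}ski's theorems, then iterate transfinitely with the cross terms annihilated --- but two steps do not hold up as written. In \romanref{inductivesteplemma1} you invert the paper's order of operations: you first localise the Szlenk index to a corner via the identity $\Sz(\tilde T_\eta)=\sup_{\xi<\omega_1}\Sz(Q_\xi\tilde T_\eta P_\xi)$ and only then apply Bourgain's theorem to that corner. The ``$\leq$'' direction of this identity is precisely separable determinacy of the Szlenk index \emph{for operators}, a nontrivial fact that you neither prove nor reference; Theorem~\ref{brookerthm} gives the operator-ideal properties of $\mathscr{S\!Z}_\alpha$, not separable determination, and Lancien's separable determinacy is a statement about spaces, not about restrictions of an operator (restricting to $Z\subseteq X$ replaces $T^*[B_{Y^*}]\subseteq X^*$ by its image in the quotient $Z^*$, so the claim needs an argument). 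The paper sidesteps this entirely: since $\Sz(\tilde T_\eta)$ is uncountable and $C_0[0,\omega_1)\cong C[0,\omega_1]$ is itself a $C(K)$-space, Theorem~\ref{bourgainszlenk} applies directly to $\tilde T_\eta$ and yields a separable subspace $W\cong C[0,\alpha]$ on which $\tilde T_\eta$ is bounded below; only \emph{afterwards} is $\xi>\eta$ chosen, via Lemmas~\ref{seprangefromEomega1}\romanref{seprangefromEomega1i} and~\ref{sepsubspaceofEomega}, so that $W$ and $\tilde T_\eta[W]$ lie in the ranges of $P_\xi$ and $Q_\xi$, whence $\tilde T_\eta$ and $(Q_\xi-Q_\eta)T(P_\xi-P_\eta)$ agree on $W$. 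Unless you can supply a proof or citation for the operator version of separable determinacy, you should restructure your argument this way; the Pe\l{}czy\'{n}ski and factorisation steps that follow are then exactly as in the paper.

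In \romanref{inductivesteplemma2}, the phrase ``after rescaling so that the factorisation constants $\|R_\nu\|\,\|S_\nu\|$ are uniformly bounded'' does not work: the product $\|R_\nu\|\,\|S_\nu\|$ is invariant under the rescaling $R_\nu\mapsto cR_\nu$, $S_\nu\mapsto c^{-1}S_\nu$, and neither Bourgain's theorem nor Pe\l{}czy\'{n}ski's complementation theorem gives any uniform control of these constants across the transfinite construction, so the assembled $\mathcal S$ need not be bounded. The repair is a pigeonhole argument, not a rescaling: normalise $\|R_\nu\|=1$, write $[0,\omega_1)=\bigcup_{n\in\N}\{\nu:\|S_\nu\|\leq n\}$, choose $n_0$ for which the corresponding set $A(n_0)$ is uncountable, and carry out your assembly of $\mathcal R$ and $\mathcal S$ over $A(n_0)$ only; since $E_{A(n_0)}\cong E_{\omega_1}$ by Lemma~\ref{factsaboutEomega1C0omega1}\romanref{EAcongEomega1iii} --- the very lemma you already invoke for your set $A$ --- this still factors $I_{E_{\omega_1}}$ through $T$. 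The rest of your argument for \romanref{inductivesteplemma2}, in particular the interleaving of Lemmas~\ref{seprangefromEomega1}\romanref{seprangefromEomega1ii} and~\ref{sepsubspaceofEomega} to make all cross terms $(Q_{\xi_\nu}-Q_{\eta_\nu})T(P_{\xi_\mu}-P_{\eta_\mu})$ with $\mu\neq\nu$ vanish, matches the paper's proof.
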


\begin{proof} \romanref{inductivesteplemma1}. Let $U =
  (I_{E_{\omega_1}} - Q_\eta)T(I_{C_0[0,\omega_1)} - P_\eta)$. Since
  $T = U + Q_\eta T + TP_\eta - Q_\eta TP_\eta$, where each of the
  final three terms~$Q_\eta T$, $TP_\eta$, and~$Q_\eta TP_\eta$ has
  countable Szlenk index, Theorem~\ref{brookerthm} implies that~$U$
  has uncountable Szlenk index. Hence~$U$ fixes an isomorphic copy
  of~$C[0,\alpha]$ by Theorem~\ref{bourgainszlenk}; that is, we can
  find a closed subspace~$W$ of~$C_0[0,\omega_1)$ such that~$W$ is
  isomorphic to~$C[0,\alpha]$ and the restriction of~$U$ to~$W$ is
  bounded below.
  Lemmas~\ref{seprangefromEomega1}\romanref{seprangefromEomega1i}
  and~\ref{sepsubspaceofEomega} enable us to choose a countable
  ordinal~$\xi>\eta$ such that the separable subspaces~$W$ and~$U[W]$
  are contained in the ranges of the projections~$P_\xi$ and~$Q_\xi$,
  respectively. Then the re\-stric\-tions to~$W$ of the operators~$U$
  and $(Q_\xi-Q_\eta)T(P_\xi - P_\eta)$ are equal. By
  The\-orem~\ref{pelczynskicomplementationthem}, $U[W]$~contains a
  closed subspace~$Z$ which is isomorphic to~$C[0,\alpha]$ and which
  is complemented in~$Q_\xi[E_{\omega_1}]$, and therefore also
  in~$E_{\omega_1}$. The conclusion now follows from
  Lemma~\ref{operatorsfactoringID} because~$U$, and thus
  $(Q_\xi-Q_\eta)T(P_\xi - P_\eta)$, maps the closed subspace $W\cap
  U^{-1}[Z]$ isomorphically onto~$Z$.

  \romanref{inductivesteplemma2}.
  Using~\romanref{inductivesteplemma1} together with
  Lemmas~\ref{seprangefromEomega1}\romanref{seprangefromEomega1ii}
  and~\ref{sepsubspaceofEomega}, we may inductively construct
  trans\-finite se\-quen\-ces of countable ordinals
  $(\eta(\alpha))_{\alpha<\omega_1}$ and
  $(\xi(\alpha))_{\alpha<\omega_1}$ and of ope\-ra\-tors
  \mbox{$(R_\alpha\colon C[0,\alpha]\to
    C_0[0,{\omega_1}))_{\alpha<\omega_1}$} and $(S_\alpha\colon
    E_{\omega_1}\to C[0,\alpha])_{\alpha<\omega_1}$ such that
    \mbox{$\eta(\alpha) < \xi(\alpha) < \eta(\beta)$},
  \begin{equation}\label{mainC0L0thmEq2}
    I_{C[0,\alpha]} = S_\alpha(Q_{\xi(\alpha)} - 
    Q_{\eta(\alpha)})T(P_{\xi(\alpha)} - P_{\eta(\alpha)})R_\alpha,
  \end{equation} 
  \begin{equation}\label{mainC0L0thmEq1}
    (I_{E_{\omega_1}} - Q_{\eta(\beta)})TP_{\xi(\alpha)} = 0,\qquad 
    \text{and}\qquad Q_{\xi(\alpha)}T(I_{C_0[0,\omega_1)} - P_{\eta(\beta)}) = 0
  \end{equation} 
  whenever $0\le\alpha<\beta<\omega_1$.  We may clearly also suppose
  that $\|R_\alpha\| = 1$ for each $\alpha<\omega_1$.  

  Given $n\in\N$, set $A(n) = \{\alpha\in[0,\omega_1) : \|
  S_\alpha\|\le n\}$.  Since $[0,\omega_1) = \bigcup_{n\in\N} A(n)$,
  we conclude that $A(n_0)$ is uncountable for some
  $n_0\in\N$. Then~$E_{A(n_0)}$ is isomorphic to~$E_{\omega_1}$ by
  Lemma~\ref{factsaboutEomega1C0omega1}\romanref{EAcongEomega1iii}, so
  that it will suffice to show that the identity operator
  on~$E_{A(n_0)}$ factors through~$T$.

  To this end, we observe that
  \[ S\colon\ x\mapsto \bigl(S_\alpha(Q_{\xi(\alpha)} -
  Q_{\eta(\alpha)})x\bigr)_{\alpha\in A(n_0)},\quad E_{\omega_1}\to
  E_{A(n_0)}, \] defines an operator of norm at most~$n_0$. Moreover,
  introducing the subspaces \[ F_\beta = \bigl\{ (f_\alpha)_{\alpha\in
    A(n_0)}\in E_{A(n_0)} : f_\alpha = 0\
  (\alpha\ne\beta)\bigr\}\qquad (\beta\in A(n_0)), \] we can define a
  linear contraction by
  \[ R\colon\ (f_\alpha)_{\alpha\in A(n_0)}\mapsto \sum_{\alpha\in
    A(n_0)}(P_{\xi(\alpha)} - P_{\eta(\alpha)})R_\alpha f_\alpha,\quad
  \spa\bigcup_{\beta\in A(n_0)}F_\beta\to C_0[0,{\omega_1}). \] Since
    the domain of definition of~$R$ is dense in~$E_{A(n_0)}$,
    $R$~extends uniquely to a linear contraction defined
    on~$E_{A(n_0)}$. Now, given $\beta\in A(n_0)$ and
    $(f_\alpha)_{\alpha\in A(n_0)}\in F_\beta$, we have
  \[ STR(f_\alpha)_{\alpha\in A(n_0)} = \bigl(S_\alpha(Q_{\xi(\alpha)}
  - Q_{\eta(\alpha)})T(P_{\xi(\beta)} - P_{\eta(\beta)})R_\beta
  f_\beta\bigr)_{\alpha\in A(n_0)} = (f_\alpha)_{\alpha\in A(n_0)} \]
  by~\eqref{mainC0L0thmEq2} and the fact that $(Q_{\xi(\alpha)} -
  Q_{\eta(\alpha)})T(P_{\xi(\beta)} - P_{\eta(\beta)}) = 0$ for
  $\alpha\ne\beta$ by~\eqref{mainC0L0thmEq1}.  This implies that $STR
  = I_{E_{A(n_0)}}$, and the result follows.
\end{proof}

\begin{proof}[Proof of Theorem~{\normalfont{\ref{mainC0L0thm}}}.]
  The implications
  \alphref{mainC0L0thm1}$\Rightarrow$\alphref{mainC0L0thm2}%
  $\Rightarrow$\alphref{mainC0L0thm3} are clear, so it remains to
  prove that
  \alphref{mainC0L0thm3}$\Rightarrow$\alphref{mainC0L0thm1}. Hence, we
  suppose that the Szlenk index of~$T$ is uncountable, and seek to
  demonstrate that the identity operator on~$C_0(L_0)$ factors
  through~$T$. Throughout the proof, we shall tacitly use the fact
  that~$C_0(L_0)$ is isomorphic to~$E_{\omega_1}$, \emph{cf.}\
  Lemma~\ref{factsaboutEomega1C0omega1}\romanref{EAcongEomega1i}.
  
  Lemma~\ref{inductivesteplemma}\romanref{inductivesteplemma2}
  establishes the desired conclusion for $X = C_0[0,\omega_1)$ and $Y
  = C_0(L_0)$.

  Now suppose that $X = Y = C_0[0,\omega_1)$. If $T\notin\mathscr{M}$,
  then the identity operator on~$C_0[0,\omega_1)$ factors through~$T$
  by Lemma~\ref{factsaboutEomega1C0omega1}%
  \romanref{factsaboutC0omega1ii}, and hence the identity operator
  on~$C_0(L_0)$ also factors through~$T$ by
  Lemma~\ref{factsaboutEomega1C0omega1}\romanref{factsaboutC0omega1i}.
  Otherwise $T\in\mathscr{M}$, in which case
  condition~\alphref{factsaboutC0omega1iib} of
  Lemma~\ref{factsaboutEomega1C0omega1}\romanref{factsaboutC0omega1ii}
  implies that $T = VUT$ for some operators $U\colon
  C_0[0,\omega_1)\to C_0(L_0)$ and $V\colon C_0(L_0)\to
  C_0[0,\omega_1)$. Then~$UT$ has the same Szlenk index as~$T$, so
  that, by the first case, the identity operator on~$C_0(L_0)$ factors
  through~$UT$, and hence through~$T$.

  Finally, suppose that $X = C_0(L_0)$, and either $Y =
  C_0[0,\omega_1)$ or $Y = C_0(L_0)$.  By
  Lemma~\ref{factsaboutEomega1C0omega1}\romanref{factsaboutC0omega1i},
  we can take operators \mbox{$U\colon C_0(L_0)\to C_0[0,\omega_1)$}
  and \mbox{$V\colon C_0[0,\omega_1)\to C_0(L_0)$} such that
  $I_{C_0(L_0)} = VU$. Then~$TV$ has the same Szlenk index as~$T$, and
  the conclusion follows from the previous two cases, as above.
\end{proof}

\begin{proof}[Proof of
  Corollary~{\normalfont{\ref{theC0L0singularideal}}}.]
  The three sets in~\eqref{theC0L0singularidealEq1} are equal by the
  negation of Theorem~\ref{mainC0L0thm}. The final of these sets is
  clearly equal to
  $\bigcup_{\alpha<\omega_1}\mathscr{S\!Z}_\alpha(X)$, which is an
  ideal of~$\mathscr{B}(X)$ by Theorem~\ref{brookerthm}.
  
  For $X = C_0(L_0)$, the fact that the second set
  in~\eqref{theC0L0singularidealEq1} is an ideal of
  $\mathscr{B}(C_0(L_0))$ implies that it is necessarily the unique
  maximal ideal by an observation of Dosev and
  Johnson (see~\cite[p.~166]{dj}).

  For $X = C_0[0,\omega_1)$, we have
  $\mathscr{S}_{C_0(L_0)}(C_0[0,\omega_1))\subsetneqq\mathscr{M}$
  because $\mathscr{S}_{C_0(L_0)}(C_0[0,\omega_1))$ is an ideal
  of~$\mathscr{B}(C_0[0,\omega_1))$, $\mathscr{M}$ is the unique
  maximal ideal, and any projection on~$C_0[0,\omega_1)$ whose range
  is isomorphic to~$C_0(L_0)$ is an example of an operator which
  belongs to
  \mbox{$\mathscr{M}\setminus\mathscr{S}_{C_0(L_0)}(C_0[0,\omega_1))$}.
    To verify the final statement of the corollary, suppose
  that~$\mathscr{I}$ is a proper ideal of
  $\mathscr{B}(C_0[0,\omega_1))$ such that~$\mathscr{I}$ is not
  contained in~$\mathscr{S}_{C_0(L_0)}(C_0[0,\omega_1))$.  Then,
  by~\eqref{theC0L0singularidealEq1}, $\mathscr{I}$ contains an
  operator~$T$ such that $I_{C_0(L_0)} = STR$ for some operators
  $R\colon C_0(L_0)\to C_0[0,\omega_1)$ and \mbox{$S\colon
    C_0[0,\omega_1)\to C_0(L_0)$}. Given $U\in\mathscr{M}$, we can
  find operators $V\colon C_0[0,\omega_1)\to C_0(L_0)$ and $W\colon
  C_0(L_0)\to C_0[0,\omega_1)$ such that $U = WV$
  by~\eqref{MfactoringthroughC0L0}, and hence $U =
  (WS)T(RV)\in\mathscr{I}$. This proves that $\mathscr{M}\subseteq
  \mathscr{I}$, and consequently $\mathscr{M} = \mathscr{I}$.
\end{proof}

\begin{proof}[Proof of Corollary~{\normalfont{\ref{C0L0primary}}}.]
  First, to show that~$C_0(L_0)$ is primary,
  let~$P\in\mathscr{B}(C_0(L_0))$ be a projection.  Since the
  ideal~$\mathscr{S}_{C_0(L_0)}(C_0(L_0))$ is proper, it cannot
  contain both~$P$ and \mbox{$I_{C_0(L_0)} - P$}; we may without loss
  of generality suppose that
  \mbox{$P\notin\mathscr{S}_{C_0(L_0)}(C_0(L_0))$}.  Then,
  by~\eqref{theC0L0singularidealEq1}, $I_{C_0(L_0)}$~factors
  through~$P$, so that $P[C_0(L_0)]$ contains a closed, complemented
  subspace which is isomorphic to~$C_0(L_0)$ by
  Lemma~\ref{operatorsfactoringID}.  Since~$P[C_0(L_0)]$ is
  complemented in~$C_0(L_0)$, and the Pe\l{}czy\'{n}ski decomposition
  method (as stated in \cite[Theorem~2.23(b)]{ak}, for instance)
  applies by Lemma~\ref{factsaboutEomega1C0omega1}%
  \romanref{EAcongEomega1i}--\romanref{factsaboutC0omega1iii}, we
  conclude that~$P[C_0(L_0)]$ and~$C_0(L_0)$ are isomorphic,
  as desired.

  Second, to verify that~$C_0(L_0)$ is complementably homogeneous,
  suppose that~$W$ is a closed subspace of~$C_0(L_0)$ such that~$W$ is
  isomorphic to~$C_0(L_0)$. Take an isomorphism~$U$ of~$C_0(L_0)$
  onto~$W$, and let~$J$ be the natural inclusion of~$W$
  into~$C_0(L_0)$. Since the opera\-tor $JU\in\mathscr{B}(C_0(L_0))$
  fixes an isomorphic copy of~$C_0(L_0)$, we can find operators~$R$
  and~$S$ on~$C_0(L_0)$ such that $I_{C_0(L_0)} = S(JU)R$ by
  Theorem~\ref{mainC0L0thm}. The operator \mbox{$P =
  JURS\in\mathscr{B}(C_0(L_0))$} is then a projection, its range is
  clearly contained in~$W$, and the restriction of~$S$ to the range
  of~$P$ is an isomorphism onto~$C_0(L_0)$ (with inverse~$JUR$).
\end{proof}

\section{Closing remarks}
\noindent 
The Banach algebras~$\mathscr{B}(C_0[0,\omega_1))$
and~$\mathscr{B}(C[0,\omega_1])$ are isomorphic because the
under\-lying Banach spaces~$C_0[0,\omega_1)$ and~$C[0,\omega_1]$ are
isomorphic.  In particular~$\mathscr{B}(C_0[0,\omega_1))$
and~$\mathscr{B}(C[0,\omega_1])$ have isomorphic lattices of closed
ideals. The study of this lattice was initiated in~\cite{KL}, while
Corollary~\ref{theC0L0singularideal} of the present paper adds another
element to our understanding of it. Figure~\ref{diagramClosedIdeals},
below, summarizes our current knowledge of this lattice,
extending~\cite[Figure~1]{KL}.

\begin{figure}[ht]%
  $\spreaddiagramrows{-1ex}%
  \xymatrix{%
    &&&\mathscr{B}\\%
    &&&\smashw[r]{\mathscr{M}}=\smashw[l]{\mathscr{G}_{C_0(L_0)}}%
    \ar@{^{(}->>}[u]\\%
    \mathscr{X}\,\ar@{^{(}->}[r]
    &\mathscr{X}+\overline{\mathscr{G}}_{c_0(\omega_1)}%
    \ar@{^{(}->}[r] &
    \bigcup_{1\le\alpha<\omega_1}\overline{\mathscr{G}}_{c_0(\omega_1,\,
      C(K_{\alpha}))}%
    \ar[r] &\mathscr{S}_{C_0(L_0)} \ar@{^{(}->>}[u]\\%
    \vdots\ar@{^{(}->>}[u] &\vdots%
    \ar@{^{(}->>}[u] &\vdots \ar@{^{(}->>}[u]
    &\vdots\ar@{^{(}->>}[u]\\%
    \overline{\mathscr{G}}_{C(K_{\alpha+1})}%
    \ar@{^{(}->}[r]\ar@{^{(}->}[u]%
    &\overline{\mathscr{G}}_{C(K_{\alpha+1})\oplus c_0(\omega_1)}%
    \ar@{^{(}->}[u]\ar@{^{(}->}[r]
    &\overline{\mathscr{G}}_{c_0(\omega_1,\, C(K_{\alpha+1}))}%
    \ar@{^{(}->}[u]\ar[r] &\mathscr{S\!Z}_{\alpha+2}\ar@{^{(}->}[u]\\%
    \overline{\mathscr{G}}_{C(K_{\alpha})}\ar@{^{(}->}[u]\ar@{^{(}->}[r]
    &\overline{\mathscr{G}}_{C(K_{\alpha})\oplus c_0(\omega_1)}%
    \ar@{^{(}->}[u]\ar@{^{(}->}[r]
    &\overline{\mathscr{G}}_{c_0(\omega_1,\, C(K_\alpha))}%
    \ar[r]\ar@{^{(}->}[u] &\mathscr{S\!Z}_{\alpha+1}\ar@{^{(}->}[u]\\%
    \vdots\ar@{^{(}->}[u] & \vdots\ar@{^{(}->}[u] &
    \vdots\ar@{^{(}->}[u]&\vdots\ar@{^{(}->}[u]\\%
    \overline{\mathscr{G}}_{C(K_1)}\ar@{^{(}->}[u]\ar@{^{(}->}[r]
    &\overline{\mathscr{G}}_{C(K_1)\oplus c_0(\omega_1)}%
    \ar@{^{(}->}[u]\ar@{^{(}->}[r]
    &\overline{\mathscr{G}}_{c_0(\omega_1,\, C(K_1))}%
    \ar[r]\ar@{^{(}->}[u] &\mathscr{S\!Z}_2\ar@{^{(}->}[u]\\%
    \overline{\mathscr{G}}_{c_0}\ar@{^{(}->}[u]\ar@{^{(}->>}[r] &
    \overline{\mathscr{G}}_{c_0(\omega_1)}\ar@{^{(}->}[u]\ar[rr]
    &&\mathscr{S\!Z}_1\ar@{^{(}->}[u]\\%
    \mathscr{K}\ar@{^{(}->>}[u]\\%
    \{0\}\ar@{^{(}->>}[u] }$
  \caption{Partial structure of the lattice of  closed ideals of
    $\mathscr{B} =
    \mathscr{B}(C_0[0,\omega_1))$.}\label{diagramClosedIdeals} 
\end{figure}
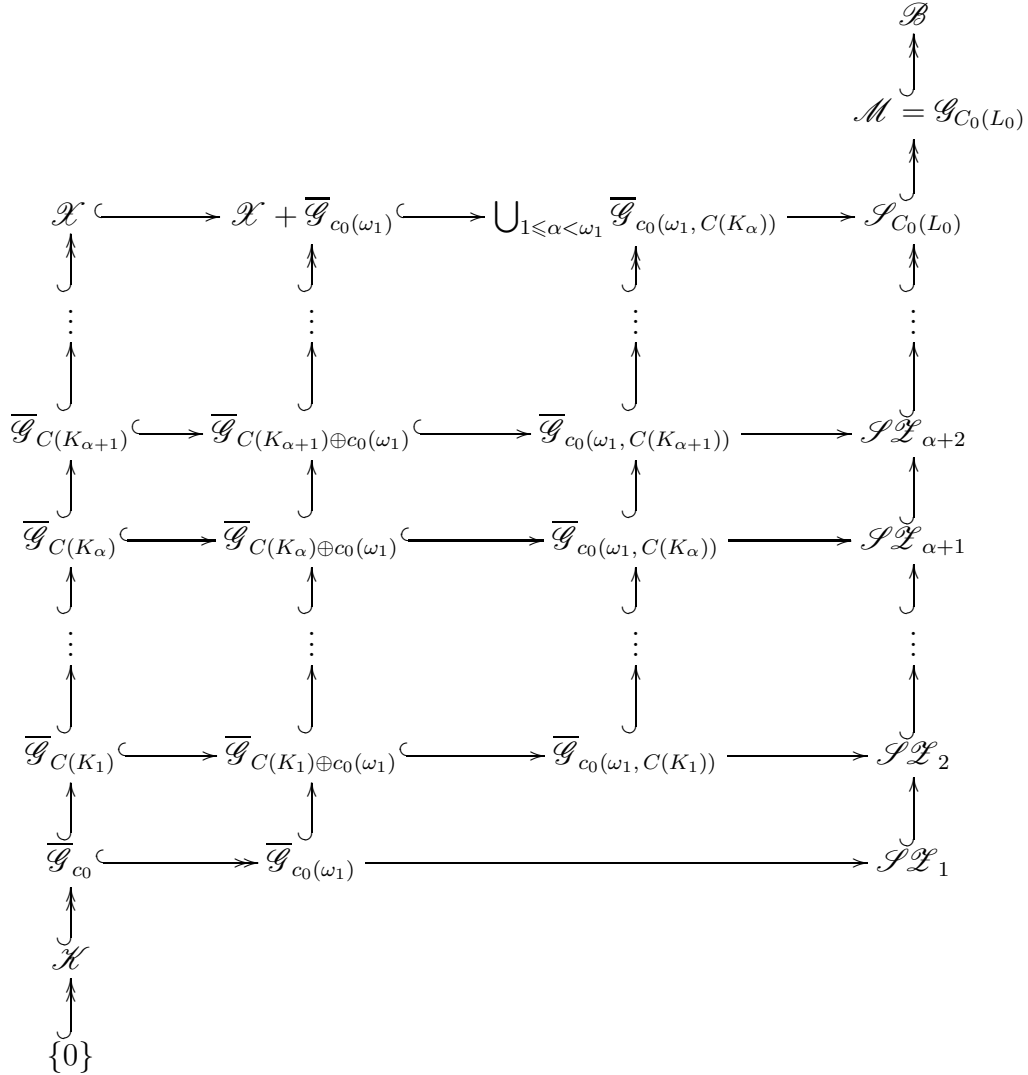

\noindent
In addition to the notation that has already been introduced,
Figure~\ref{diagramClosedIdeals} relies on the following
con\-ven\-tions: the Banach space~$C_0[0,\omega_1)$ is suppressed
everywhere, so that we write~$\mathscr{B}$ in\-stead
of~$\mathscr{B}(C_0[0,\omega_1))$, \emph{etc.}; $\mathscr{K}$~denotes
the ideal of compact operators, $\mathscr{X}$~denotes the ideal of
operators with separable range, $\mathscr{G}_X$~denotes the ideal of
operators that factor through the Banach space~$X$,
and~$\overline{\mathscr{G}}_X$ denotes its closure in the operator
norm; $c_0(\omega_1, X)$~denotes the $c_0$-direct sum of~$\omega_1$
copies of the Banach space~$X$; we write~$c_0(\omega_1)$ in the case
where~$X$ is the sca\-lar field; $\spreaddiagramcolumns{-1.3ex}%
\xymatrix{\mathscr{I}\ar[r]&\!\mathscr{J}}$ signifies that the ideal
$\mathscr{I}$ is contained in the ideal $\mathscr{J}$,
$\spreaddiagramcolumns{-1.3ex}%
\xymatrix{\mathscr{I}\,\ar@{^{(}->}[r]&\!\mathscr{J}}$ in\-di\-cates
that the inclusion is proper, while $\spreaddiagramcolumns{-1.3ex}%
\xymatrix{\mathscr{I}\,\ar@{^{(}->>}[r]&\!\mathscr{J}}$ means that
there are no closed ideals between~$\mathscr{I}$ and~$\mathscr{J}$;
and finally $K_\alpha=[0,\omega^{\omega^\alpha}]$, where~$\alpha$ is a
countable or\-di\-nal.

The relations among the ideals in the first and second columns of
Figure~\ref{diagramClosedIdeals} were all established
in~\cite[Section~5]{KL}. The double-headed arrows at the top of these
columns are meant to signify that
\begin{align}
  \mathscr{X}(C_0[0,\omega_1)) &= \bigcup_{\alpha<\omega_1}
  \overline{\mathscr{G}}_{C(K_\alpha)}(C_0[0,\omega_1))%
  \label{EqTopdiagram1}\\
  \intertext{and} (\mathscr{X} +
  \overline{\mathscr{G}}_{c_0(\omega_1)})(C_0[0,\omega_1)) &=
  \bigcup_{\alpha<\omega_1}\overline{\mathscr{G}}_{C(K_\alpha)\oplus
    c_0(\omega_1)}(C_0[0,\omega_1)),\label{EqTopdiagram2}
\end{align} 
respectively. Indeed, \eqref{EqTopdiagram1} is immediate from
Lemma~\ref{seprangefromEomega1}\romanref{seprangefromEomega1i} (which
shows that it is in fact not necessary to take the closure of the
ideals~$\mathscr{G}_{C(K_\alpha)}(C_0[0,\omega_1))$ on the right-hand
side), and~\eqref{EqTopdiagram2} can easily be verified
using~\eqref{EqTopdiagram1} together with the fact that the ideal
\mbox{$(\mathscr{X} +
  \overline{\mathscr{G}}_{c_0(\omega_1)})(C_0[0,\omega_1))$} is closed
by (the easy first part of) \cite[Theorem~5.8]{KL}.  The counter\-part
of~\eqref{EqTopdiagram1}--\eqref{EqTopdiagram2} for the third column
is obvious, while~\eqref{theC0L0singularidealEq1} establishes the
corresponding identity for the final column.

We shall next show that
\begin{equation}\label{inclusionsrowalpha}
  \overline{\mathscr{G}}_{C(K_{\alpha})\oplus
    c_0(\omega_1)}(C_0[0,\omega_1))\subsetneqq%
  \overline{\mathscr{G}}_{c_0(\omega_1,\, C(K_\alpha))}(C_0[0,\omega_1))\subseteq
  \mathscr{S\!Z}_{\alpha+1}(C_0[0,\omega_1))%\qquad (\alpha\in[1,\omega_1)). 
\end{equation}
for each countable ordinal $\alpha\ge 1$.  The first inclusion is
immediate because $c_0(\omega_1, C(K_\alpha))$ contains a closed,
complemented subspace which is isomorphic to~$C(K_{\alpha})\oplus
c_0(\omega_1)$, while the second follows from
\cite[Theorem~2.12]{brookerJFA}, which states that (the identity
operator on) the Banach space~$c_0(\omega_1, C(K_\alpha))$ has the
same Szlenk index as~$C(K_\alpha)$, that is,~$\omega^{\alpha+1}$, so
that $I_{c_0(\omega_1,\, C(K_\alpha))}$ belongs
to~$\mathscr{S\!Z}_{\alpha+1}$, and this operator ideal is closed.  To
see that the first inclusion in~\eqref{inclusionsrowalpha} is proper,
we assume the contrary and seek a contradiction. The assumption
implies that there exists an isomorphism~$U$ of~$C(K_{\alpha})\oplus
c_0(\omega_1)$ onto~$c_0(\omega_1, C(K_\alpha))$ (see, \emph{e.g.},
\cite[Lemma~3.1]{BJL}).  Denote by $P\in\mathscr{B}(C(K_\alpha)\oplus
c_0(\omega_1))$ the canonical projection with range~$C(K_\alpha)$ and
kernel~$c_0(\omega_1)$. Then~$P$ belongs to
\mbox{$\mathscr{X}(C(K_\alpha)\oplus c_0(\omega_1))$} and
$I_{C(K_\alpha)\oplus c_0(\omega_1)} - P$ belongs
to~$\mathscr{S\!Z}_1(C(K_\alpha)\oplus c_0(\omega_1))$, so that $Q =
UPU^{-1}$ belongs to~$\mathscr{X}(c_0(\omega_1, C(K_\alpha)))$, while
$I_{c_0(\omega_1,\, C(K_\alpha))} - Q$ belongs
to~$\mathscr{S\!Z}_1(c_0(\omega_1, C(K_\alpha)))$. This, however, is
impossible because, arguing as in the proof of
Lemma~\ref{sepsubspaceofEomega}, we see that since the range of~$Q$ is
separable, it is contained in the first~$\beta$ summands
of~$c_0(\omega_1, C(K_\alpha))$ for some countable ordinal~$\beta$, so
that the $(\beta+1)^{\text{st}}$ summand is contained in the kernel
of~$Q$, and hence~$I_{c_0(\omega_1,\, C(K_\alpha))} - Q$ has Szlenk
index (at least)~$\omega^{\alpha+1}$. This contradiction completes our
proof of~\eqref{inclusionsrowalpha}.  

We do not know whether the second inclusion
in~\eqref{inclusionsrowalpha} is proper. Since this appears to be an
interesting question, we shall raise it formally.
\begin{question}\label{question25dec2013} Are the
  ideals~$\overline{\mathscr{G}}_{c_0(\omega_1,\, C(K_\alpha))}(C_0[0,\omega_1))$
  and~$\mathscr{S\!Z}_{\alpha+1}(C_0[0,\omega_1))$ equal for some, or
  possibly all, countable ordinals~$\alpha\ge1$?
\end{question}
\noindent
It is important that the underlying Banach space is~$C_0[0,\omega_1)$
in this question; indeed, it is easy to find an example of a Banach
space~$X$ on which there is an operator~$T$ which has Szlenk index at
most~$\omega^{\alpha+1}$, but~$T$ does not factor approximately
through~$c_0(\omega_1, C(K_\alpha))$. For instance, take $X = \ell_2$
and $T = I_X$; then the Szlenk index of~$T$ is~$\omega$, but~$T$ does
not factor approximately through~$c_0(\omega_1, C(K_\alpha))$ because
no closed (complemented) subspace of~$c_0(\omega_1, C(K_\alpha))$ is
isomorphic to an infinite-dimensional Hilbert space.

Finally, to see that the inclusions
$\overline{\mathscr{G}}_{c_0(\omega_1,\, C(K_\alpha))}(C_0[0,\omega_1))\subseteq
\overline{\mathscr{G}}_{c_0(\omega_1,\, C(K_{\alpha+1}))}(C_0[0,\omega_1))$
and $\mathscr{S\!Z}_{\alpha+1}(C_0[0,\omega_1))\subseteq
\mathscr{S\!Z}_{\alpha+2}(C_0[0,\omega_1))$ are proper for each
countable ordinal~$\alpha\ge1$, we observe that, for $\beta =
\omega^{\omega^{\alpha+1}}$, the projection~$P_\beta$ defined
immediately before Lemma~\ref{seprangefromEomega1} belongs to
$\mathscr{G}_{C(K_{\alpha+1})}(C_0[0,\omega_1))%
\setminus\mathscr{S\!Z}_{\alpha+1}(C_0[0,\omega_1))$.

One may wonder whether it is possible to obtain a complete description
of the lattice of closed ideals of $\mathscr{B}(C_0[0,\omega_1))$. As
explained in~\cite[p.~4834]{KL}, we consider this an extremely
difficult problem because ``in the first instance, one would need to
classify the closed ideals of
$\mathscr{B}(C[0,\omega^{\omega^\alpha}])$ for each countable
ordinal~$\alpha$, something that already appears intractable; it has
currently been achieved only in the simplest case $\alpha = 0$, where
$C[0,\omega]\cong c_0$.''

\bibliographystyle{amsplain}

\end{document}